\documentclass[final,a4paper,leqno]{siamltex}

\usepackage[english]{babel}
\usepackage{amsmath,amssymb,amsfonts,graphicx,subfigure}
\usepackage[ruled]{algorithm2e} 
\usepackage{myshortcuts}
\usepackage[utf8]{inputenc}
\usepackage{graphicx}
\usepackage{amsmath,amssymb,amsfonts,mathrsfs}
\usepackage{url}
\usepackage[pdftex, bookmarks=false, colorlinks=true, linkcolor=black, urlcolor=black, citecolor=black, linktoc=section]{hyperref}
\usepackage{epstopdf}



\newtheorem{remark}[theorem]{Remark}



\title{Nonlinearizing two-parameter eigenvalue problems}
\author{Emil Ringh\thanks{
Department of Mathematics, KTH Royal Institute of Technology, SeRC Swedish
e-science research center, Lindstedtsvägen 25, SE-100 44 Stockholm, Sweden, email:
\{eringh,eliasj\}@kth.se
}
\and Elias Jarlebring$^*$}
\date{\today}


\begin{document}

\maketitle
\begin{abstract}
  We investigate a technique to transform a linear
  two-parameter eigenvalue problem, into a nonlinear eigenvalue problem (NEP).
  The transformation stems from an elimination of one of the equations in the two-parameter
  eigenvalue problem, by considering it as a (standard) generalized eigenvalue problem.
  We characterize the equivalence between the original and the nonlinearized problem
  theoretically and show how to use the transformation computationally.
  Special cases of the transformation can be interpreted as a reversed
  companion linearization for polynomial eigenvalue problems,
  as well as a reversed (less known) linearization technique for certain algebraic
  eigenvalue problems with square-root terms.
  Moreover, by exploiting the structure of the NEP we present algorithm specializations for NEP methods, although the technique also allows general solution
  methods for NEPs to be directly applied.
  The nonlinearization is illustrated in examples and simulations,
  with focus on problems where the eliminated equation is of much smaller
  size than the other two-parameter eigenvalue equation.
  This situation arises naturally in domain decomposition techniques.
  A general error analysis is also carried out under the assumption that
  a backward stable eigenvalue solver method is used to solve the
  eliminated problem,
  leading to the conclusion that the error is benign in this situation.
\end{abstract}
\begin{keywords}
two-parameter eigenvalue problem, nonlinear eigenvalue problem,  multiparameter eigenvalue problem, iterative algorithms, implicit function theorem
\end{keywords}
\begin{AMS}
65F15, 15A18, 47J10, 65H17, 15A22, 15A69
\end{AMS}
\section{Introduction}\label{sec:intro}
This paper concerns the
\emph{two-parameter eigenvalue problem}:
Determine  non-trivial quadruplets
$(\lambda,x,\mu,y)\in\CC\times\CC^n\times\CC\times\CC^m$
such that
\begin{subequations}\label{eq:2ep}
\begin{eqnarray}
  0&=&A_1x+\lambda A_2x+\mu A_3x\label{eq:2ep_a}   \\
  0&=&B_1y+\lambda B_2y+\mu B_3y\label{eq:2ep_b},
\end{eqnarray}
\end{subequations}
where  $A_1,A_2,A_3\in\CC^{n\times n}$, and
$B_1,B_2,B_3\in\CC^{m\times m}$.
We denote the corresponding functions
$A(\lambda,\mu):=A_1+\lambda A_2+\mu A_3$ and
$B(\lambda,\mu):=B_1+\lambda B_2+\mu B_3$.
This problem has been extensively studied in the literature,
see, e.g., the fundamental work of Atkinson \cite{Atkinson:1972:MULTIPARAMETER},
and the summary of
recent developments below.
We assume that
 $m\ll n$ and
that $A_1$, $A_2$ and $A_3$ are large and sparse matrices,
although several theoretical contributions of this paper
are valid without this assumption.

The main idea of our approach can be described as follows.
We view \eqref{eq:2ep_b} as a parameterized generalized
linear eigenvalue problem, where $\lambda$ is the parameter.
Due to perturbation theory for eigenvalue
problems,
there is a family of continuous functions $\{g_i(\lambda)\}$,
defined by the eigenvalues of
\eqref{eq:2ep_b}, where $\mu$ is the eigenvalue,
of a \emph{generalized eigenvalue problem} (GEP).
More formally, for a fixed value of $\lambda$ the
functions $g_i(\lambda)$ and $y_i(\lambda)$ can be defined,
as the solution to
\begin{subequations}\label{eq:nlsys}
\begin{eqnarray}
  0&=&(B_1+\lambda B_2+g_i(\lambda) B_3)y_i(\lambda) \label{eq:nlsys_a} \\
  1&=&c^Ty_i(\lambda), \label{eq:nlsys_b}
\end{eqnarray}
\end{subequations}
for a given vector $c\in\CC^m$.
We explicitly introduced the normalization condition
\eqref{eq:nlsys_b}, to uniquely define a corresponding eigenvector.
The condition \eqref{eq:nlsys_b} is not a restriction
of generality except for the rare situation that
the eigenvector is orthogonal to $c$.
We prefer this condition over the standard Euclidean normalization,
since the right-hand side of \eqref{eq:nlsys_b} is an analytic function.

By insertion of $\mu=g_i(\lambda)$
into \eqref{eq:2ep_a}, we see that a solution to
\eqref{eq:2ep} will satisfy
\begin{equation}  \label{eq:nep}
  M(\lambda)x=(A_1+\lambda A_2+g_i(\lambda) A_3)x=0.
\end{equation}
Note that we have now eliminated $\mu$ and \eqref{eq:2ep_b},
at the cost of the introduction of a nonlinear
function into the eigenvalue problem.
The problem $M(\lambda)x=0$ is called
a \emph{nonlinear eigenvalue problem} (NEP). In our setting it is rather
a family of NEPs, since we have a different nonlinearity for each
function $g_1,\ldots,g_m$.
The study of NEPs is a mature field within numerical
linear algebra, and there are considerable theoretical results,
as well as algorithms and software for NEPs.

The main
contributions of this paper
consist of a theoretical characterization
of the elimination procedure (Section~\ref{sec:theory}),
analysis of structured perturbations
corresponding to the elimination (Section~\ref{sec:conditioning}),
as well as new algorithms for \eqref{eq:2ep} based on
NEP-algorithms (Section~\ref{sec:alg}). We provide
software for the simulations, both for MATLAB and for
Julia \cite{Bezanson2017}.
The Julia software is implemented
using the data structures of the NEP-PACK software
package \cite{Jarlebring:2018:NEPPACK},
including adaption of theory for how to compute derivatives and projections.
This provides new ways to solve
\eqref{eq:2ep}, using the large number of NEP-solvers
available in NEP-PACK. Some contributions are also converse,
i.e.,
we provide insight to NEPs based on the equivalence
with two-parameter eigenvalue problems. For instance,
in Sections~\ref{sec:peplinearization}--\ref{sec:algebraic} we show
how to transform certain NEPs with square-root nonlinearities
to two-parameter eigenvalue problems.
This in turn (using the operator determinants described below)
allows us to transform the problem to a standard generalized eigenvalue
problem, similar to companion linearization techniques for polynomial
and rational eigenvalue problems.

We now summarize the NEP-results relevant for our approach.
For a broad overview see the summary papers
\cite{Ruhe:1973:NLEVP,Mehrmann:2004:NLEVP,Voss2012,Guttel2017},
as well as the benchmark collection \cite{Betcke2010}
and software packages with NEP-solvers
\cite{Roman:2018:SLEPC,Hernandez:2003:SSL,Hernandez:2005:SSF,Jarlebring:2018:NEPPACK}). There is considerable theoretical works
available for the NEP, in particular for
polynomial eigenvalue problems. Techniques to
transform polynomial NEPs to standard eigenvalue
problems (known as linearization)
have been completely characterized in a number of works,
e.g., \cite{Mackey:2006:STRUCTURED,Mackey:2006:VECT} and
\cite{Nakatsukasa:2017:BIVARIATE}.
We relate our approach to this type of linearization
in Section~\ref{sec:peplinearization}.
In our derivation, we make explicit use of the implicit
function theorem applied to the NEP. This
has been done in the context of sensitivity analysis,
leading to eigenvector free formulas for conditioning
\cite{Alam:2019:SENSITIVITY}. There are a number
of algorithms available for NEPs, of which many
seem to be applicable to \eqref{eq:nep}.
More specifically, we characterize the specialization
of residual inverse iteration \cite{Neumaier:1985:RESINV},
which forms the basis of more
recent methods such as the nonlinear Arnoldi
method \cite{Voss:2004:ARNOLDI}.
We also show how the infinite Arnoldi method
\cite{Jarlebring:2012:INFARNOLDI}
can be adapted to \eqref{eq:nep}.

In Section~\ref{sec:dd} we illustrate how two-parameter
eigenvalue problems of this type can arise by the separation of
domains of a partial-differential equation (PDE).
The domains are decoupling  in a way that
the discretization leads to a two-parameter eigenvalue problem.
In this context,
the elimination corresponds to an elimination of
one of the domains.
The elimination of an outer domain, in a way that directly leads to NEPs,
by introduction of artificial boundary
conditions is the origin of several standard
NEPs in the literature, e.g.,  \cite{Tausch2000}
and the electromagnetic cavity model in
\cite{VanBeeumen:2018:GUN2}.

Relevant results for two-parameter eigenvalues can be summarized
as follows. Many results for two-parameter
eigenvalue problems are phrased in the more general setting
of \emph{multiparameter eigenvalue problems}.
There are a number of recent efficient algorithms
available, e.g., based on the Jacobi--Davidson
approach \cite{Hochstenbach:2005:TWOPARAMETER,Hochstenbach:2002:JDDEFTWOPARAM},
including subspace methods in \cite{Hochstenbach:2019:SUBSPACE}.
A number of generalizations
of inverse iteration are derived in \cite{Plestenjak:2016:NUMERICAL}.
Our approach is based on an eigenvalue
parameterization viewpoint. Eigenvalue parameterization
and continuation  techniques
(but with an additional parameter) have been studied,
e.g., in \cite{Plestenjak:2000:CONTINUATION}.

One of the most fundamental properties of two-parameter
eigenvalue problems is the fact that
solutions are given by the solution to a larger
linear (generalized) eigenvalue problem.
This is also often used in the numerical algorithms
mentioned above, and to our knowledge first proposed
as a numerical method in \cite{Slivnik:1986:NUMMETH}.
More precisely, we associate
with \eqref{eq:2ep} the \emph{operator determinants}
\begin{eqnarray}
  \Delta_0&=&B_2\otimes A_3-B_3\otimes A_2\label{eq:Delta0}\\
  \Delta_1&=&B_3\otimes A_1-B_1\otimes A_3\\
  \Delta_2&=&B_1\otimes A_2-B_2\otimes A_1
\end{eqnarray}
where $\otimes$ denotes the Kronecker product.
The solutions to \eqref{eq:2ep} are (under
certain assumptions) equivalent to the solutions to the
two generalized eigenvalue problems
\begin{subequations}\label{eq:deltaeqs}
\begin{eqnarray}
  \Delta_1z&=&\lambda \Delta_0z \label{eq:deltaeqs_a} \\
  \Delta_2z&=&\mu \Delta_0z \label{eq:deltaeqs_b}
\end{eqnarray}
\end{subequations}
where $z=y\otimes x$. In practice, the application
of a general purpose eigenvalue solver on one of the
GEPs
in \eqref{eq:deltaeqs} yields an accurate solution
for small systems.
The equivalence between \eqref{eq:deltaeqs} and
\eqref{eq:2ep} holds under non-singularity assumption;
in particular
the problem is singular if
$A_3$ and $B_3$ both are singular; or
$A_2$ and $B_2$ both are singular.
See \cite{Atkinson:1972:MULTIPARAMETER} for a precise characterization,
and \cite{Kosir:1994:FINITE,Hochstenbach:2003:BACKWARD} for more recent
formulations.

The following matrix is often used in
theory for eigenvalue multiplicity and eigenvalue conditioning,
and will be needed throughout the paper. We denote
\begin{align}\label{eq:cond_mat}
C_0 := \begin{bmatrix}
v^HA_2x & v^HA_3x\\
w^HB_2y  & w^HB_3y
\end{bmatrix},
\end{align}
where $v$ and $w$ are left eigenvectors associated with
\eqref{eq:2ep_a} and \eqref{eq:2ep_b} respectively.
In particular,  for an (algebraically) simple eigenvalue of the two-parameter eigenvalue problem~\eqref{eq:2ep}, the matrix $C_0$ is nonsingular; see \cite[Lemma~3]{Kosir:1994:FINITE}, \cite[Lemma~1.1]{Hochstenbach:2005:TWOPARAMETER}, and \cite[Lemma~1]{Hochstenbach:2003:BACKWARD}.
For a simple eigenvalue, the normwise condition number for the two-parameter eigenvalue problem is expressed as a special induced matrix norm of $C_0^{-1}$, see \cite[Section~4]{Hochstenbach:2003:BACKWARD}.

\section{Nonlinearization}\label{sec:theory}
\subsection{Existence and equivalence}\label{sec:existence_equivalence}
The elimination of the $B$-equation in the
two-parameter eigenvalue problem can be explicitly characterized as we describe
next.
We show how the existence of a nonlinearization can be
explicitly related to the Jordan structure of the  (parametrized) GEP
\begin{equation}  \label{eq:B_eig}
-(B_1+\lambda B_2)y=\mu B_3y,
\end{equation}
which we can also use in practice for the computation of $\mu=g_i(\lambda)$ for a given $\lambda$.
The existence of analytic functions is formalized in the following lemma. The invertibility assumption in the lemma will be further characterized in Theorem~\ref{thm:jordan}.

  \begin{lemma}[Existence of implicit functions]\label{thm:existence}
    Let $\lambda\in\CC$ be given and assume that $(\mu,y)$ is
    such that \eqref{eq:2ep_b} is satisfied with $y$
    normalized as  $c^Ty=1$.
    Moreover, assume that
    \begin{equation}  \label{eq:Jdef}
    J(\lambda,\mu,y):=\begin{bmatrix}
    B(\lambda,\mu)& B_3y \\
    c^T & 0
    \end{bmatrix}
    \end{equation}
    is nonsingular.
  Then, there exist functions
  $g_i:\CC\rightarrow\CC$ and  $y_i:\CC\rightarrow\CC^m$ such that
  \begin{itemize}
  \item $g_i$ and $y_i$ are analytic in $\lambda$,
  \item $g_i$ and $y_i$ satisfy \eqref{eq:nlsys} in a neighborhood of $\lambda$, and
  \item $\mu=g_i(\lambda)$ and $y=y_i(\lambda)$.
  \end{itemize}
\end{lemma}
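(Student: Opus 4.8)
The plan is to recognize that the conclusion is exactly the statement of the (holomorphic) implicit function theorem, so the entire task reduces to packaging the hypotheses into a form where that theorem applies, and then checking that the invertibility of $J$ is the nondegeneracy condition the theorem requires. First I would define the map
\begin{equation*}
F(\lambda,\mu,y) := \begin{bmatrix} B(\lambda,\mu)y \\ c^Ty-1 \end{bmatrix} = \begin{bmatrix} (B_1+\lambda B_2+\mu B_3)y \\ c^Ty-1 \end{bmatrix},
\end{equation*}
which maps $\CC\times\CC\times\CC^m$ into $\CC^{m+1}$. The pair $(\mu,y)$ we are handed satisfies $F(\lambda,\mu,y)=0$ by the hypotheses \eqref{eq:2ep_b} and $c^Ty=1$. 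Since $F$ is a polynomial in all its arguments, it is holomorphic, so the holomorphic implicit function theorem is the right tool: I want to solve $F=0$ for the $m+1$ unknowns $(\mu,y)$ as analytic functions of the single parameter $\lambda$ near the given point.

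The key step is to compute the Jacobian of $F$ with respect to the variables $(\mu,y)$ and recognize it as precisely the matrix $J(\lambda,\mu,y)$ in \eqref{eq:Jdef}. Differentiating the first block $B(\lambda,\mu)y$ with respect to $y$ gives $B(\lambda,\mu)$, and with respect to $\mu$ gives $B_3y$ (since $\partial_\mu B(\lambda,\mu)=B_3$); differentiating the second block $c^Ty-1$ gives $c^T$ with respect to $y$ and $0$ with respect to $\mu$. Arranging these into the Jacobian, with the $y$-derivatives and the $\mu$-derivative in the appropriate columns, yields exactly
\begin{equation*}
\frac{\partial F}{\partial(y,\mu)} = \begin{bmatrix} B(\lambda,\mu) & B_3y \\ c^T & 0 \end{bmatrix} = J(\lambda,\mu,y).
\end{equation*}
By hypothesis this matrix is nonsingular at the given point, which is exactly the invertibility condition the implicit function theorem demands.

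With these two ingredients in place, the implicit function theorem directly yields analytic functions $\mu=g_i(\lambda)$ and $y=y_i(\lambda)$, defined in a neighborhood of the given $\lambda$, satisfying $F(\lambda,g_i(\lambda),y_i(\lambda))=0$ identically and passing through the given point, so that $\mu=g_i(\lambda)$ and $y=y_i(\lambda)$. Unpacking $F=0$ componentwise recovers precisely the two equations \eqref{eq:nlsys_a} and \eqref{eq:nlsys_b}, and analyticity of the implicit functions is guaranteed by the holomorphic version of the theorem since $F$ is holomorphic. I do not expect any step to be a serious obstacle; the only mild subtlety is being careful about the ordering of the variables $(\mu,y)$ versus the block structure of $J$ so that the computed Jacobian matches \eqref{eq:Jdef} exactly rather than up to a column permutation. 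The indexing by $i$ in the conclusion is simply a bookkeeping reminder that different starting eigenpairs $(\mu,y)$ of \eqref{eq:2ep_b} give rise to different branches, each obtained by one application of the argument above.
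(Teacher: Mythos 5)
Your proposal is correct and follows essentially the same route as the paper: both define the map $f(\lambda,\mu,y)=\bigl[\,B(\lambda,\mu)y;\;c^Ty-1\,\bigr]$, identify $J$ in \eqref{eq:Jdef} as its partial Jacobian with respect to $(y,\mu)$, and invoke the complex (holomorphic) implicit function theorem. Your explicit verification that the block derivatives assemble into $J$ is a welcome detail that the paper states without computation.
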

\begin{proof}
  The proof is based on the complex implicit function theorem. Consider the analytic function $f:\CC^{m+2}\rightarrow\CC^{m+1}$ given by
\begin{equation}\label{eq:fun_impthm}
f(\lambda,\mu,y):=\begin{bmatrix}
B(\lambda,\mu)y\\ c^Ty-1
\end{bmatrix}.
\end{equation}
Then $J$ as in \eqref{eq:Jdef} is the partial Jacobian of $f$ with respect to the variables $y$ and $\mu$, i.e., $J=\partial f/\partial(y, \mu)$. Since $f(\lambda,\mu_i,y_i)=0$ and $J(\lambda,\mu_i,y_i)$ is nonsingular, the existence of the desired functions, analytic in the point $\lambda$, follows from the complex implicit function theorem \cite[Theorem~I.7.6]{Fritzsche:2002:Holomorphic}.
    \end{proof}

Under the same conditions that the implicit functions exist we have the following equivalence between the solutions to the NEP~\eqref{eq:nep} and the solutions to the two-parameter eigenvalue problem~\eqref{eq:2ep}.

  \begin{theorem}[Equivalence]\label{thm:equivalence}
    Suppose the quadruplet $(\lambda,x,\mu,y)\in\CC\times\CC^n\times\CC\times\CC^m$ is such
    that $c^Ty=1$ and $J(\lambda,\mu,y)$ defined in \eqref{eq:Jdef} is nonsingular. Then,
    $(\lambda,x,\mu,y)$ is a solution to \eqref{eq:2ep} if and only
    if $(\lambda,x)$ is a solution to the NEP~\eqref{eq:nep} for one pair of
    functions $(g(\lambda),y(\lambda))=(\mu,y)$
    which satisfies \eqref{eq:nlsys}.
  \end{theorem}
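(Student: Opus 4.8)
The plan is to prove the two directions of the equivalence separately, relying on Lemma~\ref{thm:existence} to supply the implicit functions $(g,y)$ in a neighborhood of $\lambda$. First I would treat the forward direction. Assume $(\lambda,x,\mu,y)$ solves \eqref{eq:2ep} with $c^Ty=1$ and $J(\lambda,\mu,y)$ nonsingular. Then \eqref{eq:2ep_b} together with the normalization is exactly the system \eqref{eq:nlsys} evaluated at the given $\lambda$, so by Lemma~\ref{thm:existence} there exist analytic functions $g$ and $y$ with $g(\lambda)=\mu$ and $y(\lambda)=y$ satisfying \eqref{eq:nlsys} nearby. Substituting $\mu=g(\lambda)$ into \eqref{eq:2ep_a} gives precisely $M(\lambda)x=(A_1+\lambda A_2+g(\lambda)A_3)x=0$, so $(\lambda,x)$ solves the NEP \eqref{eq:nep} for this pair of functions.

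For the converse, suppose $(\lambda,x)$ solves \eqref{eq:nep} for one pair $(g,y)$ satisfying \eqref{eq:nlsys} with $g(\lambda)=\mu$ and $y(\lambda)=y$. The crucial point is to recover both equations of the two-parameter problem. Equation \eqref{eq:nlsys_a} evaluated at $\lambda$ reads $(B_1+\lambda B_2+g(\lambda)B_3)y=0$, which is exactly \eqref{eq:2ep_b} with $\mu=g(\lambda)$; the normalization \eqref{eq:nlsys_b} gives $c^Ty=1$. Meanwhile $M(\lambda)x=0$ with $\mu=g(\lambda)$ inserted is precisely \eqref{eq:2ep_a}. Hence $(\lambda,x,\mu,y)$ with $\mu:=g(\lambda)$ and $y:=y(\lambda)$ satisfies both \eqref{eq:2ep_a} and \eqref{eq:2ep_b}, so it solves \eqref{eq:2ep}.

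The two directions are thus essentially bookkeeping: the definitions of $M(\lambda)$, $g$, and $y$ are arranged so that the NEP and the $B$-equation are literally the two halves of \eqref{eq:2ep} after the substitution $\mu=g(\lambda)$. The one place requiring genuine care, and what I expect to be the main obstacle, is ensuring the implicit functions are well-defined and that the statement ``for one pair of functions'' is handled correctly. The nonsingularity of $J$ is what guarantees, via Lemma~\ref{thm:existence}, that such analytic $(g,y)$ exist and that the given $(\mu,y)$ is actually realized as $(g(\lambda),y(\lambda))$ for one of these branches rather than lying on some other branch. I would therefore state explicitly that the hypothesis on $J$ is invoked precisely to obtain this branch in the forward direction, and note that in the converse direction the existence of the pair is assumed, so no further appeal to $J$ is needed there beyond consistency of the normalization.
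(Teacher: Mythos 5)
Your proposal is correct and follows essentially the same route as the paper: the forward direction invokes Lemma~\ref{thm:existence} to obtain the branch $(g,y)$ with $(g(\lambda),y(\lambda))=(\mu,y)$, and the backward direction is direct substitution, with \eqref{eq:2ep_a} recovered from \eqref{eq:nep} and \eqref{eq:2ep_b} from \eqref{eq:nlsys}. Your added remark on where the nonsingularity of $J$ is actually needed is a fair elaboration of what the paper leaves implicit.
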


  \begin{proof}
To prove the forward implication direction suppose $(\lambda,x,\mu,y)$ is a solution to \eqref{eq:2ep}.
  From Lemma~\ref{thm:existence},
  there are functions $g(\lambda)$ and $y(\lambda)$  such that $g(\lambda)=\mu$ and $y(\lambda)=y$. Therefore, \eqref{eq:nep} is
  satisfied for that pair $(g(\lambda),y(\lambda))$.

  To prove the backward implication direction suppose $(\lambda,x)$ is a solution to \eqref{eq:nep} for a
  given pair $(g(\lambda),y(\lambda))$. Then
  $(\lambda,x,\mu,y)=(\lambda,x,g(\lambda),y(\lambda))$ is a solution to \eqref{eq:2ep}.
  More precisely, \eqref{eq:2ep_a} is satisfied since \eqref{eq:nep} and
  \eqref{eq:2ep_b} is satisfied due to \eqref{eq:nlsys}.
  \end{proof}

The situation that the Jacobian matrix in \eqref{eq:Jdef} is singular is
a non-generic situation. It turns out, as we show in the following
theorem, that it is singular (essentially) if and only if
the GEP \eqref{eq:B_eig} has a Jordan chain of length two or more.
Therefore, our technique in general works
if a solution to the two parameter eigenvalue problem~\eqref{eq:2ep} corresponds to
a simple eigenvalue $\mu$ of the GEP~\eqref{eq:B_eig}.

\begin{theorem}[Singularity and Jordan structure]\label{thm:jordan}
Let $\lambda\in\CC$ be given and assume that $c$ is not orthogonal to any
eigenvector of the GEP~\eqref{eq:B_eig}. Moreover, assume that $(\mu_i,y_i)$
is a solution to the GEP with $y_i$ normalized as $c^Ty_i=1$.
Then $J(\lambda,\mu_i,y_i)$ is singular if and only if there exists a vector
$u\in\CC^m$ such that
\begin{equation*}
B(\lambda,\mu_i)u + B_3 y_i = 0,
\end{equation*}
i.e., there exists a Jordan chain of length at least two corresponding to the GEP \eqref{eq:B_eig} and eigenpair $(\mu_i,y_i)$.
\end{theorem}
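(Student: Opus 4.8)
The plan is to first translate the singularity of $J(\lambda,\mu_i,y_i)$ into a concrete kernel condition, and then match that condition to the defining relation of a Jordan chain. Since $J$ is a square $(m+1)\times(m+1)$ matrix, it is singular if and only if it has a nontrivial null vector. Writing a candidate null vector as $\begin{bmatrix} u\\ \alpha\end{bmatrix}$ with $u\in\CC^m$ and $\alpha\in\CC$, the equation $J\begin{bmatrix} u\\ \alpha\end{bmatrix}=0$ is equivalent to the two conditions $B(\lambda,\mu_i)u+\alpha B_3y_i=0$ and $c^Tu=0$. Thus the whole theorem reduces to showing that such a nonzero pair $(u,\alpha)$ exists precisely when the Jordan-chain equation $B(\lambda,\mu_i)u'+B_3y_i=0$ is solvable.

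For the direction singular $\Rightarrow$ Jordan chain, I would start from a nonzero null vector $(u,\alpha)$ and split on whether $\alpha=0$. If $\alpha\neq 0$, then $u'=u/\alpha$ immediately satisfies $B(\lambda,\mu_i)u'+B_3y_i=0$, giving the chain. The delicate case is $\alpha=0$: then $u\neq 0$, $B(\lambda,\mu_i)u=0$ and $c^Tu=0$, so $u$ would be an eigenvector of the GEP~\eqref{eq:B_eig} orthogonal to $c$. This is exactly excluded by the hypothesis that $c$ is not orthogonal to any eigenvector, so this case cannot occur. I expect this $\alpha=0$ case to be the main (and only genuine) obstacle, and it is precisely where the non-orthogonality assumption is needed.

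For the converse Jordan chain $\Rightarrow$ singular, I would take a solution $u$ of $B(\lambda,\mu_i)u+B_3y_i=0$ and correct it to meet the normalization constraint. Setting $\tilde u := u-(c^Tu)y_i$ and using $c^Ty_i=1$ gives $c^T\tilde u=0$, while $B(\lambda,\mu_i)y_i=0$ (as $(\mu_i,y_i)$ solves the GEP) gives $B(\lambda,\mu_i)\tilde u+B_3y_i=0$. Hence $\begin{bmatrix}\tilde u\\ 1\end{bmatrix}$ is a nonzero null vector of $J$, so $J$ is singular.

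Finally, I would record the Jordan-chain interpretation explicitly: writing the GEP~\eqref{eq:B_eig} as $B(\lambda,\mu)y=0$, the relation $B(\lambda,\mu_i)y_i=0$ together with $B(\lambda,\mu_i)u+B_3y_i=0$ is exactly the statement that $(y_i,u)$ forms a Jordan chain of length at least two for the eigenvalue $\mu_i$, with $B_3$ in the role of the "mass" matrix. This identifies the solvability condition established above with the existence of such a chain, completing the equivalence.
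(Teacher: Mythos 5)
Your proof is correct and follows essentially the same route as the paper's: the same case split on $\alpha=0$ versus $\alpha\neq 0$ (using the non-orthogonality hypothesis to rule out $\alpha=0$), and the same corrected vector $u-(c^Tu)y_i$ for the converse direction. No gaps to report.
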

\begin{proof}
We start by proving that singularity implies the existence of a Jordan chain.
Assume that $J(\lambda,\mu_i,y_i)$ is singular. Then there exists a non-trivial vector $\begin{bmatrix}
z &\alpha
\end{bmatrix}^T\in\CC^{m+1}$ such that
$J(\lambda,\mu_i,y_i)\begin{bmatrix}
z &\alpha
\end{bmatrix}^T = 0$.
The first row gives
\begin{equation}\label{eq:jordan_row1}
B(\lambda,\mu_i)z + B_3 y_i \alpha = 0,
\end{equation}
and the second row gives
\begin{equation}\label{eq:jordan_row2}
c^Tz = 0.
\end{equation}
The cases $\alpha=0$ and $\alpha\neq0$ are investigated separately.
Assume that $\alpha=0$, then $z\neq0$ and thus \eqref{eq:jordan_row1} implies that $z$ is an eigenvector to the GEP. However, \eqref{eq:jordan_row2} gives a contradiction. If $\alpha\neq 0$ then \eqref{eq:jordan_row1} gives that there exists a Jordan chain of length at least two, with $u=z/\alpha$. Hence, singularity implies the existence of a Jordan chain.

To prove the converse we assume that there exists a Jordan chain of length at least two. Let $z:=u-(c^Tu) y_i$. Note that from construction of $z$ \eqref{eq:jordan_row2} holds. Moreover, from the Jordan chain we know that \eqref{eq:jordan_row1} holds for the constructed $z$ with $\alpha=1$. Hence, the vector $\begin{bmatrix}
z & 1
\end{bmatrix}^T$ is a non-trivial vector in the null-space of $J(\lambda,\mu_i,y_i)$. Thus the existence of a Jordan chain implies singularity.
\end{proof}

From a practical point of view we know that if we compute simple eigenvalues of the GEP~\eqref{eq:B_eig} such that $c$ is not orthogonal to the corresponding eigenvector, then the Jacobian is nonsingular. Hence, the nonlinearization exists. The result is formalized in the following corollary to Theorem~\ref{thm:jordan}.
\begin{corollary}\label{cor:jordan_simple}
Let $\lambda\in\CC$ be given. Assume that $\mu_i\in\CC$ is a simple eigenvalue of the GEP \eqref{eq:B_eig}, and that $y_i$ is a corresponding right eigenvector normalized as $c^Ty_i=1$. Then $J(\lambda,\mu_i,y_i)$ is nonsingular, where $J$ is defined in \eqref{eq:Jdef}.
\end{corollary}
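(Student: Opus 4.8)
The plan is to obtain the corollary as a direct consequence of Theorem~\ref{thm:jordan}, exploiting that an algebraically simple eigenvalue admits no Jordan chain of length two or more. Since $\mu_i$ is simple, its algebraic multiplicity is one; this forces geometric multiplicity one and, crucially, rules out the existence of any $u\in\CC^m$ satisfying $B(\lambda,\mu_i)u+B_3y_i=0$. By Theorem~\ref{thm:jordan} the existence of exactly such a vector is equivalent to singularity of $J(\lambda,\mu_i,y_i)$, so its nonexistence delivers nonsingularity.

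The one point requiring care is that Theorem~\ref{thm:jordan} is stated under the hypothesis that $c$ is not orthogonal to \emph{any} eigenvector of the GEP~\eqref{eq:B_eig}, whereas the corollary assumes only the single normalization $c^Ty_i=1$. I would therefore check that the relevant part of that hypothesis is automatic here. Inspecting the proof of Theorem~\ref{thm:jordan}, the orthogonality assumption is invoked solely in the case $\alpha=0$, where the kernel vector $z$ is shown to be an eigenvector of the GEP for the eigenvalue $\mu_i$. Because $\mu_i$ is simple, its eigenspace is one-dimensional and spanned by $y_i$, so every such $z$ is a scalar multiple of $y_i$; hence $c^Tz=0$ together with $c^Ty_i=1$ already forces $z=0$. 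Thus the only eigenvectors the argument ever touches are those associated with $\mu_i$, and non-orthogonality to these follows from $c^Ty_i=1$ alone.

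To keep the argument self-contained I would simply run the contrapositive directly rather than quoting the theorem. Suppose $J(\lambda,\mu_i,y_i)$ is singular and take a nontrivial kernel vector $\begin{bmatrix} z & \alpha\end{bmatrix}^T$, which yields $B(\lambda,\mu_i)z+\alpha B_3y_i=0$ and $c^Tz=0$. If $\alpha=0$, then $z$ is a nonzero eigenvector for $\mu_i$, so $z=\beta y_i$, and $0=c^Tz=\beta$ gives the contradiction $z=0$. If $\alpha\neq 0$, then $u:=z/\alpha$ satisfies $B(\lambda,\mu_i)u+B_3y_i=0$, exhibiting a Jordan chain of length at least two and contradicting simplicity. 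In either case we reach a contradiction, so $J$ is nonsingular.

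I expect the obstacle to be bookkeeping rather than conceptual: making precise that algebraic simplicity of $\mu_i$ supplies both ingredients at once, namely geometric multiplicity one to dispose of the $\alpha=0$ branch and the absence of a length-two Jordan chain to dispose of the $\alpha\neq0$ branch, and confirming that the weaker normalization $c^Ty_i=1$ genuinely suffices in place of the global non-orthogonality assumption of Theorem~\ref{thm:jordan}.
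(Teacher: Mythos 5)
Your proof is correct and follows the same route as the paper, which states this corollary without proof as an immediate consequence of Theorem~\ref{thm:jordan}. Your extra care in checking that the theorem's global hypothesis (that $c$ is not orthogonal to \emph{any} eigenvector of the GEP~\eqref{eq:B_eig}) can be replaced by the single normalization $c^Ty_i=1$ --- because simplicity makes the $\mu_i$-eigenspace one-dimensional, so the $\alpha=0$ branch only ever involves multiples of $y_i$ --- is a genuine and worthwhile addition, since the corollary as literally stated does not inherit that hypothesis from the theorem.
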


Many algorithms for NEPs depend on analyticity in a target domain. From the above theory we directly conclude the following result for the convergence radius of the implicitly constructed analytic function.
\begin{corollary}[Convergence radius]\label{cor:conv_rad}
  Let $\lambda\in\CC$ be given and assume that $(\mu_i,y_i)$ is
  a solution to the GEP \eqref{eq:B_eig} with
  $y_i$ normalized as $c^Ty_i=1$. Moreover, assume
  that $J(\lambda,\mu_i,y_i)$ is invertible, where
  $J$ is defined in \eqref{eq:Jdef}.
  The functions $g_i$ and $y_i$
  in Theorem~\ref{thm:existence} can be chosen such that
  they are analytic in the open disk with radius $r$ centered in $\lambda$,
  where $r$ is defined by
  \[
  r=\min\big\{|\lambda-s|:\textrm{ such that }
    g_i(s) \textrm{ is a  double eigenvalue of the GEP }\eqref{eq:B_eig}\big\}.
  \]
\end{corollary}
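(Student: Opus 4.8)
The plan is to realize the pair $(g_i,y_i)$ as the maximal single-valued analytic continuation of the local germ produced by Lemma~\ref{thm:existence}, and to identify the radius of that continuation with the distance to the nearest obstruction, where the obstruction is exactly a failure of the invertibility hypothesis used in that lemma. First I would record the engine of the continuation: at any point $s$ for which $J(s,g_i(s),y_i(s))$ is nonsingular, the complex implicit function theorem can be reapplied, extending $g_i$ and $y_i$ analytically to a neighborhood of $s$ as solutions of \eqref{eq:nlsys}. By Theorem~\ref{thm:jordan} this bordered matrix is singular exactly when $g_i(s)$ carries a Jordan chain of length at least two, and by Corollary~\ref{cor:jordan_simple} it is nonsingular whenever $g_i(s)$ is a simple eigenvalue of the GEP \eqref{eq:B_eig}. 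Hence the continuation proceeds unobstructed as long as the tracked eigenvalue stays simple.

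The core of the argument is then a maximal-disk step. Let $\rho$ denote the supremum of radii for which the germ extends to a single-valued holomorphic branch $g_i$ (together with $y_i$) on the open disk $D_{\rho}$ centered at $\lambda$, with $g_i(s)$ a simple eigenvalue throughout. I would show $\rho\ge r$ by contradiction. Suppose $\rho<r$ and choose a boundary point $s_0$ with $|\lambda-s_0|=\rho$ past which the branch cannot be extended. Since $\rho<r$, the definition of $r$ forces $g_i(s_0)$ to be a simple eigenvalue of \eqref{eq:B_eig} at $s_0$; by Corollary~\ref{cor:jordan_simple} the matrix $J(s_0,g_i(s_0),y_i(s_0))$ is then nonsingular, so Lemma~\ref{thm:existence} applies at $s_0$ and furnishes an analytic branch on a neighborhood of $s_0$ coinciding with $g_i$. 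This extends the branch beyond $D_\rho$, contradicting maximality, so $\rho\ge r$. Single-valuedness on $D_r$ then follows from the monodromy theorem, the disk being simply connected and the local branch being uniquely determined wherever $J$ is invertible.

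The step I expect to require the most care is justifying the existence of the boundary limit $g_i(s_0)$, which is what makes the mildly self-referential definition of $r$ legitimate: $r$ is expressed through the very branch $g_i$ whose domain we are trying to establish. One must show that as $s\to s_0$ inside $D_\rho$ the tracked eigenvalue has a finite limit that is again an eigenvalue at $s_0$. Since the eigenvalues are the roots of $\det(B_1+sB_2+\mu B_3)$, whose coefficients depend analytically on $s$, the roots vary continuously and such a limit exists provided the branch does not escape to infinity; I would therefore separately exclude a pole of $g_i$ inside $D_r$, i.e.\ a collision of the finite branch with the eigenvalue at infinity that arises when $B_3$ becomes deficient in the relevant invariant direction. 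A secondary remark concerns the gap between ``double eigenvalue'' in the statement and ``Jordan chain of length at least two'' in Theorem~\ref{thm:jordan}: at a \emph{semisimple} double eigenvalue $J$ stays nonsingular and the branch remains analytic, so the stated $r$ is in general only a lower bound for the true radius of analyticity, which is precisely the analyticity that the corollary asserts.
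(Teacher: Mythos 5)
Your proposal follows the same route as the paper's (very terse) proof: inside the disk of radius $r$ there are no double eigenvalues along the tracked branch, so by Theorem~\ref{thm:jordan} and Corollary~\ref{cor:jordan_simple} the bordered Jacobian $J$ stays nonsingular, and Lemma~\ref{thm:existence} can be reapplied at every point. What you add — the maximal-disk continuation, the monodromy argument for single-valuedness, and the discussion of whether the branch admits a finite limit at the boundary (excluding escape to infinity when $B_3$ degenerates) — is exactly the bookkeeping the paper's two-sentence proof leaves implicit, and it also correctly flags the mild circularity in the definition of $r$. So the main argument is sound and, if anything, more careful than the published one.

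One concrete error in your closing remark: it is not true that $J$ stays nonsingular at a \emph{semisimple} double eigenvalue. If the eigenspace of $B(\lambda,\mu)$ has dimension two, it meets the hyperplane $\{z: c^Tz=0\}$ nontrivially, and any nonzero $z_0$ in that intersection gives $J\begin{bmatrix} z_0^T & 0\end{bmatrix}^T=0$, so $J$ is singular. This is consistent with Theorem~\ref{thm:jordan}, whose standing hypothesis that $c$ is not orthogonal to \emph{any} eigenvector already rules out geometric multiplicity $\geq 2$; under that hypothesis ``double eigenvalue'' and ``Jordan chain of length at least two'' coincide, so there is no gap between the statement of the corollary and the theorem. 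Since you only used this remark to argue that $r$ might be conservative, it does not invalidate your proof of the corollary, but the claim itself should be dropped.
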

\begin{proof} By the definition of $r$, Theorem~\ref{thm:jordan} ensures that $J(\lambda,g_i(\lambda),y_i)$ is nonsingular in all points in the open disk. Application of Theorem~\ref{thm:existence} to all those points establishes the result.
  \end{proof}

As discussed above,
the choice of solution to
the GEP~\eqref{eq:B_eig}
corresponds to the choice of function $g_i(\lambda)$. We note that from Corollary~\ref{cor:jordan_simple} it is clear that the existence of the nonlinearization only relies on that the chosen eigenvalue, of the GEP, is simple. Similarly from  Corollary~\ref{cor:conv_rad} it is clear that the convergence radius is dependent only on the specific function $g_i$ in consideration. Hence, the existence, and the convergence radius, of the NEP~\eqref{eq:nep} only depends on the behavior of $g_i$ and not the complete eigenstructure of the GEP.

\subsection{Nonlinearizations leading to quadratic eigenvalue problems}\label{sec:peplinearization}
We first illustrate the theory in the previous section with an
implicitly defined function which can be derived explicitly.
Consider the two-parameter
  eigenvalue problem
\begin{subequations}\label{eq:2ep_qep}
\begin{eqnarray}
  0&=&A_1x+\lambda A_2x+\mu A_3x  \label{eq:2ep_qep_a} \\
  0&=&\left(\begin{bmatrix}0&0\\0&-1\end{bmatrix}
    +\lambda
\begin{bmatrix}0&1\\1&0\end{bmatrix}
  +\mu
\begin{bmatrix}-1&0\\0&0\end{bmatrix}\right) y, \label{eq:2ep_qep_b}
\end{eqnarray}
\end{subequations}
for general matrices $A_1$, $A_2$ and $A_3$. The second row
in \eqref{eq:2ep_qep_b} implies that the elements in
the vector $y^T=\begin{bmatrix} y_1 & y_2\end{bmatrix}$ are related by $y_2=\lambda y_1$.
The first row in \eqref{eq:2ep_qep_b} becomes
$\lambda^2y_1-\mu y_1=0$.
Hence, since $y_1\neq 0$, we have $\mu=\lambda^2$ and
\eqref{eq:2ep_qep_a} becomes
\begin{equation}  \label{eq:qep}
  0=A_1x+\lambda A_2x+\lambda^2 A_3x.
\end{equation}
  This problem is commonly known as the quadratic eigenvalue problem,
  which has been extensively studied in the literature \cite{Tisseur:2001:QUADRATIC}. The
  example shows that the two-parameter eigenvalue problem
  \eqref{eq:2ep_qep} can be nonlinearized to a quadratic eigenvalue
  problem.
  Moreover, the determinant operator equation \eqref{eq:deltaeqs_a} leads to the
  equation
  \[
  \begin{bmatrix}
    -A_1&0\\
    0 & A_3
  \end{bmatrix}z=\lambda\begin{bmatrix}
  A_2&A_3\\
  A_3& 0
  \end{bmatrix}z,
  \]
  which is a particular companion linearization of \eqref{eq:qep}. (It is in fact
  a symmetry preserving linearization \cite[Section~3.4]{Tisseur:2001:QUADRATIC}.)
  Many of the linearizations of polynomial eigenvalue problems given in
  \cite{Mackey:2006:VECT} can be obtained in a similar fashion.
  Since, the second equation \eqref{eq:2ep_b} can be expressed
  as $\det(B(\lambda,\mu))=0$, which is a bi-variate polynomial,
  this example is consistent
  with the bivariate viewpoint of companion linearizations
  in \cite{Nakatsukasa:2017:BIVARIATE}. Some higher-degree polynomials can be constructed
  analogously to above, e.g., the polynomial eigenvalue problem
  $A_1+\lambda A_2+\lambda^mA_3$.
  However, the general higher-degree polynomial eigenvalue problem does not seem
  to fit into the class of two-parameter eigenvalue problems.

   \begin{figure}[h]
     \begin{center}
      \subfigure[$\re(g_+(\lambda))$]{\scalebox{1}{\includegraphics{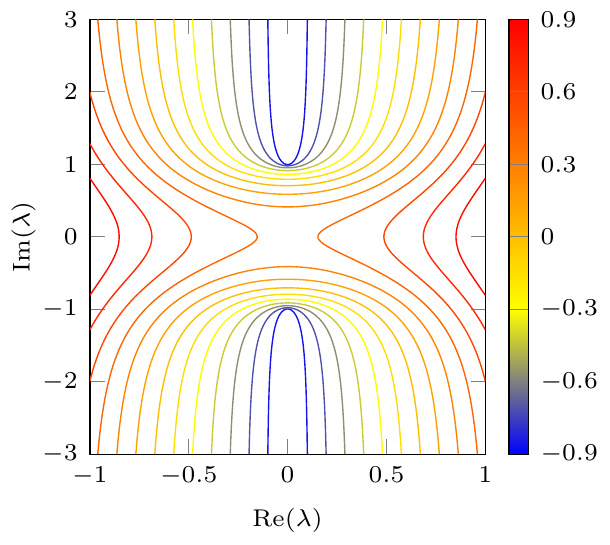}}}
      \subfigure[$\im(g_+(\lambda))$]{\scalebox{1}{\includegraphics{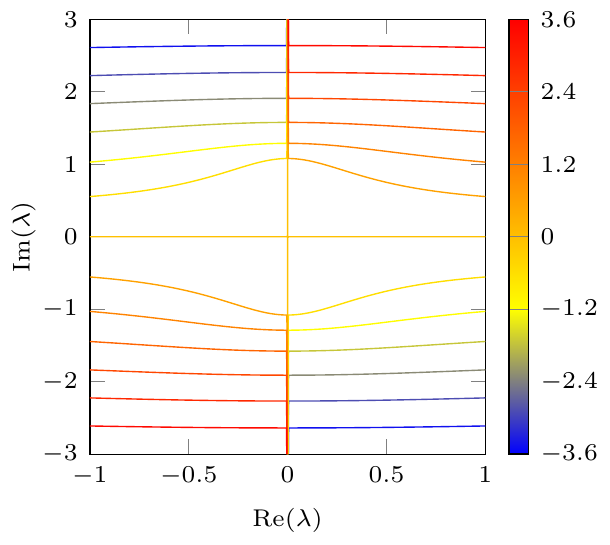}}}
    \caption{The square root nonlinearity illustrated in the example in Section~\ref{sec:algebraic}, with
$a=3$,
$b=2$,
$c=-1$,
$d=-2$,
$e=2$ and
      $f=1$. We observe a square root singularity at $\lambda=\pm\sqrt{-3/2}$ which are
      the roots of $p(\lambda)$.
      \label{fig:two_by_two}
    }
     \end{center}
   \end{figure}

\subsection{Nonlinearization leading to algebraic functions}\label{sec:algebraic}
The previous example can be modified in a way that
it leads to algebraic functions, which is also the generic situation.
  Nontrivial solutions to~\eqref{eq:2ep_b} satisfy $\det(B(\lambda,\mu))=0$,
  which is a bivariate polynomial. Therefore,
  the functions $g_i(\lambda)$ are roots of a polynomial,
  where the coefficients are polynomials in $\lambda$, i.e.,
  $g_i$ are algebraic functions. The generic situation
  can be seen from the case where $m=2$:
\begin{subequations}\label{eq:sqrt_MEP}
\begin{eqnarray}
    0&=&(A_1+\lambda A_2+\mu A_3)x  \label{eq:sqrt_MEP_a} \\
0&=&\left(\begin{bmatrix}
  a&b\\c&d
\end{bmatrix}
+\lambda
\begin{bmatrix}
  0&e\\f&0
\end{bmatrix}
+\mu
\begin{bmatrix}
  1&0\\0&1
\end{bmatrix}\right)y. \label{eq:sqrt_MEP_b}
 \end{eqnarray}
\end{subequations}
We obtain that
$\mu$ is the root of a polynomial, where the coefficients depend
on $\lambda$, i.e.,
\[
0=(\mu+a)(\mu+d)-(c+\lambda f)(b+\lambda e).
\]
The explicit solutions to this quadratic equation are given by
\[
\mu=g_{\pm}(\lambda)=-\frac{a+d}{2}\pm\sqrt{\frac{(a+d)^2}{4}-ad+(b+\lambda e)(c+\lambda f)}.
\]
We see by insertion of $\mu=g_{\pm}$  into \eqref{eq:sqrt_MEP_a} that
the nonlinearization of \eqref{eq:sqrt_MEP} is a NEP with an algebraic nonlinearity.
The function $g_+$ is illustrated in Figure~\ref{fig:two_by_two}.

Several general conclusions can be made from this example. Note that
the variables $a,b,c,d,e,f$ can be used for fitting of any function $\sqrt{p(\lambda)}$
where $p$ is a polynomial of degree two.
Therefore, we can now reverse the nonlinearization,
and for the trivial case $a=d=0$ we directly obtain the following
characterization.
\begin{lemma}[Two-parameterization of an algebraic NEP]\label{lem:sqrt}
  Suppose $p(\lambda)=(b+\lambda e)(c+\lambda f)$ is given,
  and let $a=d=0$.
  If $(\lambda,x)$ is a solution to the NEP
\begin{equation}\label{eq:NEP_algebraic}
(A_1+\lambda A_2+\sqrt{p(\lambda)}A_3)x,
\end{equation}
then  $(\lambda,x,\mu,y)$ satisfies the two-parameter eigenvalue problem equation \eqref{eq:sqrt_MEP} with
 $\mu:=\sqrt{p(\lambda)}$ and
\[
y:=\begin{bmatrix}
\sqrt{b+\lambda e}\\
-\sqrt{c+\lambda f}
\end{bmatrix}.
\]
  \end{lemma}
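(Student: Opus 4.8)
The plan is to verify directly that the proposed quadruplet $(\lambda, x, \mu, y)$ satisfies both equations of \eqref{eq:sqrt_MEP}, treating the two equations separately. For \eqref{eq:sqrt_MEP_a}, I would simply observe that with $\mu := \sqrt{p(\lambda)}$, the equation becomes $(A_1 + \lambda A_2 + \sqrt{p(\lambda)} A_3)x = 0$, which is precisely the statement that $(\lambda, x)$ solves the NEP~\eqref{eq:NEP_algebraic}. So the first equation holds essentially by definition of $\mu$ and the hypothesis, requiring no real work.

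The substance lies in verifying \eqref{eq:sqrt_MEP_b} with the explicitly given $y$. First I would specialize the matrix equation to the case $a = d = 0$, so that the $B$-equation reads
\[
\left(\begin{bmatrix} 0 & b \\ c & 0 \end{bmatrix} + \lambda \begin{bmatrix} 0 & e \\ f & 0 \end{bmatrix} + \mu \begin{bmatrix} 1 & 0 \\ 0 & 1 \end{bmatrix}\right) y = 0,
\]
i.e.\ the matrix $\begin{bmatrix} \mu & b + \lambda e \\ c + \lambda f & \mu \end{bmatrix}$ applied to $y$. I would then compute the two rows of this product with $y = \begin{bmatrix} \sqrt{b+\lambda e} \\ -\sqrt{c + \lambda f}\end{bmatrix}$ and $\mu = \sqrt{p(\lambda)} = \sqrt{(b+\lambda e)(c+\lambda f)}$. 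The first row gives $\mu \sqrt{b+\lambda e} - (b+\lambda e)\sqrt{c+\lambda f}$, and I would show this equals zero by factoring out $\sqrt{b+\lambda e}$ and using $\mu = \sqrt{b+\lambda e}\,\sqrt{c+\lambda f}$; the second row gives $(c+\lambda f)\sqrt{b + \lambda e} - \mu \sqrt{c+\lambda f}$, which vanishes by the symmetric manipulation. This amounts to confirming that the constructed $y$ is an eigenvector corresponding to the eigenvalue $\mu = g_+(\lambda)$, which is consistent with the derivation of $g_\pm$ as the roots of $\mu^2 - (b+\lambda e)(c+\lambda f) = 0$ in the $a=d=0$ case.

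The only genuine subtlety, and the point I would flag as the main obstacle, is the consistent choice of branches for the three square roots $\sqrt{b+\lambda e}$, $\sqrt{c+\lambda f}$, and $\sqrt{p(\lambda)}$. The cancellations in both rows rely on the identity $\sqrt{b+\lambda e}\,\sqrt{c+\lambda f} = \sqrt{(b+\lambda e)(c+\lambda f)} = \mu$, which holds only up to sign in the complex plane; one must fix the branch of $\mu$ to match the product of the chosen branches for the two factors. I would therefore state that the branches of the scalar square roots defining $y$ are to be taken compatibly with the branch of $\sqrt{p(\lambda)}$ defining $\mu$, so that this product identity holds, and then the row-by-row verification goes through immediately. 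Under that branch convention the computation is routine, and no invertibility or genericity hypothesis beyond $y \neq 0$ is needed, since the claim is a one-directional implication.
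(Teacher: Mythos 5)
Your direct row-by-row verification is correct and is exactly the computation the paper leaves implicit: the lemma is stated without a proof environment, presented as a ``direct'' consequence of reversing the preceding derivation in which $g_\pm$ arise as the roots of $\mu^2=(b+\lambda e)(c+\lambda f)$ when $a=d=0$, so checking that \eqref{eq:sqrt_MEP_a} holds by the definition of $\mu$ and that $y$ annihilates $\begin{bmatrix}\mu & b+\lambda e\\ c+\lambda f & \mu\end{bmatrix}$ is precisely what the authors intend. Your caveat that the branches of $\sqrt{b+\lambda e}$, $\sqrt{c+\lambda f}$ and $\sqrt{p(\lambda)}$ must be chosen compatibly so that $\sqrt{b+\lambda e}\,\sqrt{c+\lambda f}=\mu$ (and likewise that $y\neq 0$ requires the two factors not to vanish simultaneously) is a genuine subtlety the paper glosses over, and you resolve it correctly.
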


A further consequence of the lemma is that problems
of the type \eqref{eq:NEP_algebraic} can be
linearized to a standard GEP using
the determinant operators \eqref{eq:deltaeqs}.
More precisely, the combination of Lemma~\ref{lem:sqrt} and \eqref{eq:deltaeqs}
shows that \eqref{eq:NEP_algebraic} can be solved by computing
solutions to
\begin{align*}
\begin{bmatrix}
A_1 & -bA_3\\
-cA_3 & A_1
\end{bmatrix}z
=
\lambda
\begin{bmatrix}
-A_2 & e A_3\\
fA_3 & -A_2
\end{bmatrix}z.
\end{align*}
The fact that algebraic NEPs can be linearized was already pointed out in the
conference presentation \cite{Shao:2018:ALGNEP}, for
a specific case using techniques not involving two-parameter eigenvalue
problems.

Also note that the functions $g_i(\lambda)$ have branch-point singularities.
This is the generic situation and we can therefore never expect that
the nonlinearizations are entire functions in general. The singularities restrict
the performance of many methods, as we will see in the simulations.
The implications of singularities in practice is
well-known in quantum chemistry, where
parameterized eigenvalue problems is a fundamental
tool and the singularities are referred to as
intruder states \cite[Chapter~14]{Helgaker:2000:ELECTRONIC}.
In that context, methods for computing the closest singularity (which limits
the performance of the method) are given in \cite{Jarlebring2011a,Kvaal:2011:PHYSPAPER}.

\section{Algorithm specializations}\label{sec:alg}

\subsection{Derivative based algorithms}\label{sec:der}
Many NEP-algorithms are based on derivatives of $M$. We will now
illustrate how to efficiently and reliably access the derivatives
of the NEP stemming from a nonlinearization of a two-parameter eigenvalue
problem. As a representative first situation
we consider the augmented Newton method, see
\cite{Ruhe:1973:NLEVP,Unger:1950:NICHTLINEARE}.
It can be derived by an elimination
of the correction equation in Newtons method, and
leads to separate eigenvalue and eigenvector update
formulas expressed as
\begin{subequations}\label{eq:augnewton}
\begin{eqnarray}
 x_{k+1}&=&\alpha_kM(\lambda_k)^{-1}M'(\lambda_k)x_{k}\\
 \alpha_k^{-1}&=&d^TM(\lambda_k)^{-1}M'(\lambda_k)x_{k}
\end{eqnarray}
\end{subequations}
and $\lambda_{k+1}=\lambda_k-\alpha_k$, where $d\in\CC^{n}$ is a normalization vector.
In an implementation, one takes advantage of the fact that the same linear system
appears twice, and only needs to be computed once.
The iteration has appeared in many variations with different names, e.g.,
inverse iteration \cite{Spence:2005:PHOTONIC} and Newtons method \cite{unger2013convergence}.

In order to apply \eqref{eq:augnewton} we clearly need the derivative of $M$
defined in \eqref{eq:nep}, which can be
obtained directly if we can compute the derivative of the implicitly defined function $g_i$.
Note that the functions $g_i(\lambda)$ (as well as the auxiliary vector  $y_i(\lambda)$)
can be evaluated by solving the GEP~\eqref{eq:B_eig},
and normalizing according to $c^Ty_i=1$.
Since the functions are analytic in general,
their respective derivatives exist. They can be computed according to the following result,
which gives a recursion that can compute the $k$th derivative by solving $k$
linear systems of dimension $(m+1)\times(m+1)$.
The adaption of the theorem and \eqref{eq:augnewton}  into an algorithm results in  Algorithm~\ref{alg:augnewton}.
\begin{theorem}[Explicit recursive form for derivatives]\label{thm:der}
  Let $\lambda\in\CC$ be given and assume that $(\mu_i,y_i)$
  is a solution to the GEP \eqref{eq:B_eig} with
  $y_i$ normalized as $c^Ty_i=1$. Moreover, assume
  that $J(\lambda,\mu_i,y_i)$ is invertible, where
  $J$ is defined in \eqref{eq:Jdef}. Let
  $g_i$ and $y_i$ be the functions defined
  in Lemma~\ref{thm:existence},
  then the $k$th derivative, $k=1,2,\dots$, of $g_i$ and $y_i$ are given by
  \begin{equation}  \label{eq:J_der}
    \begin{bmatrix}
      y_i^{(k)}(\lambda)\\
      g_i^{(k)}(\lambda)
    \end{bmatrix}
    =J(\lambda,\mu_i,y_i)^{-1}\begin{bmatrix}
    -b_k\\
    0
    \end{bmatrix},
  \end{equation}
    where
    \[
 b_k=B_2y_i^{(k-1)}(\lambda)+\sum_{j=1}^{k-1}{k \choose j} g_i^{(k-j)}(\lambda)B_3y_i^{(j)}(\lambda).
    \]
\end{theorem}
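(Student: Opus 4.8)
The plan is to use that, by Lemma~\ref{thm:existence}, the pair $(g_i,y_i)$ satisfies the system~\eqref{eq:nlsys} \emph{identically} in a neighborhood of the given point $\lambda$. Consequently both \eqref{eq:nlsys_a} and \eqref{eq:nlsys_b} are analytic identities in $\lambda$ and may be differentiated any number of times, after which the resulting identities are evaluated back at $\lambda$. Differentiating the normalization~\eqref{eq:nlsys_b} is immediate: for every $k\ge 1$ it gives $c^Ty_i^{(k)}(\lambda)=0$, which is exactly the bottom block row of the system in~\eqref{eq:J_der}, i.e. the row $\begin{bmatrix}c^T & 0\end{bmatrix}$ of $J$ acting on $\begin{bmatrix}y_i^{(k)} & g_i^{(k)}\end{bmatrix}^T$ against a zero right-hand side.

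The substance of the argument is the $k$th derivative of the vector identity~\eqref{eq:nlsys_a}, namely $\big(B_1+\lambda B_2+g_i(\lambda)B_3\big)y_i(\lambda)=0$. I would apply the Leibniz rule to this product, using that, as a function of $\lambda$, the matrix $B_1+\lambda B_2+g_i(\lambda)B_3$ has first derivative $B_2+g_i'(\lambda)B_3$ and $j$th derivative $g_i^{(j)}(\lambda)B_3$ for $j\ge 2$. The decisive step is to separate the highest-order contributions from the rest. The terms carrying $y_i^{(k)}$ combine into $B(\lambda,\mu_i)y_i^{(k)}$, and the unique term carrying $g_i^{(k)}$ is $g_i^{(k)}B_3y_i$; together these form the top block row of $J(\lambda,\mu_i,y_i)$ applied to $\begin{bmatrix}y_i^{(k)} & g_i^{(k)}\end{bmatrix}^T$. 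All remaining terms involve only derivatives of $y_i$ and $g_i$ of order at most $k-1$ and are collected on the right-hand side as $-b_k$. In particular, the explicit scalar factor $\lambda$ in front of $B_2$ contributes, via $\tfrac{d}{d\lambda}\lambda=1$, the term proportional to $B_2y_i^{(k-1)}$, while the Leibniz expansion of $g_i(\lambda)B_3y_i(\lambda)$ yields the sum $\sum_{j=1}^{k-1}\binom{k}{j}g_i^{(k-j)}(\lambda)B_3y_i^{(j)}(\lambda)$.

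Assembling the two differentiated equations then gives the square $(m+1)\times(m+1)$ linear system
\[
J(\lambda,\mu_i,y_i)\begin{bmatrix}y_i^{(k)}(\lambda)\\ g_i^{(k)}(\lambda)\end{bmatrix}=\begin{bmatrix}-b_k\\ 0\end{bmatrix},
\]
with $J$ as in~\eqref{eq:Jdef}. Since $J$ is invertible by hypothesis, multiplying on the left by $J^{-1}$ yields the claimed formula~\eqref{eq:J_der}. As $b_k$ depends only on derivatives of order $\le k-1$, this is a genuine recursion that produces the $k$th derivative from the earlier ones at the cost of one $(m+1)\times(m+1)$ solve per order.

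I expect the main obstacle to be purely the combinatorial bookkeeping of the Leibniz expansion: tracking the binomial coefficients and, above all, cleanly isolating the $k$th-order unknowns $y_i^{(k)}$ and $g_i^{(k)}$ so that the matrix coupling them is recognized as $J$ from~\eqref{eq:Jdef}. To guard against sign and coefficient errors it is worth checking the base case $k=1$ directly, where differentiation gives $B(\lambda,\mu_i)y_i'+g_i'B_3y_i=-B_2y_i$, matching the pattern before the general bookkeeping is carried out.
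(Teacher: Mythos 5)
Your proposal is correct and follows essentially the same route as the paper's own proof: differentiate the identity \eqref{eq:nlsys} $k$ times via the Leibniz rule, observe that the coefficients of the highest-order unknowns $y_i^{(k)}$ and $g_i^{(k)}$ assemble exactly into $J(\lambda,\mu_i,y_i)$, and collect the lower-order terms on the right-hand side as $-b_k$. One detail worth noting (which applies equally to the paper's derivation): the Leibniz expansion of $\lambda B_2 y_i(\lambda)$ actually yields $k\,B_2 y_i^{(k-1)}(\lambda)$, so the first term of $b_k$ as printed should carry a factor $k$ for $k\ge 2$, a discrepancy that the suggested sanity check at $k=1$ cannot detect.
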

\begin{proof}
We again consider the analytic function $f$ given by \eqref{eq:fun_impthm}.
By Lemma~\ref{thm:existence} we know that $g_i$ and $y_i$ are analytic around $\lambda$, and that $f(\lambda,g_i(\lambda),y_i(\lambda))=0$ in a neighborhood of $\lambda$. Taking the $k$th implicit derivative with respect to $\lambda$ gives
\begin{equation*}
0
=
\frac{d^k}{d\lambda^k}
\begin{bmatrix}
B_1y_i(\lambda)\\ c^Ty_i(\lambda)-1
\end{bmatrix}
+
\frac{d^k}{d\lambda^k}
\begin{bmatrix}
\lambda B_2 y_i(\lambda)\\0
\end{bmatrix}
+
\frac{d^k}{d\lambda^k}
\begin{bmatrix}
g_i(\lambda)B_3 y_i(\lambda)\\ 0
\end{bmatrix}.
\end{equation*}
The two first terms are found directly as
\begin{equation*}
\frac{d^k}{d\lambda^k}
\begin{bmatrix}
B_1y_i(\lambda)\\ c^Ty_i(\lambda)-1
\end{bmatrix}
=
\begin{bmatrix}
B_1y_i^{(k)}(\lambda)\\ c^Ty_i^{(k)}(\lambda)
\end{bmatrix},
\end{equation*}
and
\begin{equation*}
\frac{d^k}{d\lambda^k}
\begin{bmatrix}
\lambda B_2 y_i(\lambda)\\0
\end{bmatrix}
=
\begin{bmatrix}
\lambda B_2 y_i^{(k)}(\lambda)\\0
\end{bmatrix}
+
\begin{bmatrix}
B_2 y_i^{(k-1)}(\lambda)\\0
\end{bmatrix}.
\end{equation*}
The third term can be calculated, by using Leibniz derivation rule for products, to be
\begin{align*}
&\frac{d^k}{d\lambda^k}
\begin{bmatrix}
g_i(\lambda)B_3 y_i(\lambda)\\ 0
\end{bmatrix}
=\\
&\begin{bmatrix}
\sum_{j=1}^{k-1} {k \choose j} g_i^{(k-j)}(\lambda) B_3 y_i^{(j)}(\lambda)\\0
\end{bmatrix}
+
\begin{bmatrix}
g_i^{(k)}(\lambda) B_3 y_i(\lambda)\\0
\end{bmatrix}
+
\begin{bmatrix}
 g_i(\lambda) B_3 y_i^{(k)}(\lambda)\\0
\end{bmatrix}
.
\end{align*}
We emphasize the recursion: All derivatives up to order $k-1$ can be considered known since these do not depend on the higher derivatives. Collecting the known terms in the right-hand-side gives the result.
\end{proof}
\begin{remark}\label{rem:g_der}
As a special case of Theorem~\ref{thm:der}, for $k=1$, we find that
$g_i'(\lambda) = -\frac{w_i^H B_2 y_i}{w_i^H B_3 y_i}$
where $w_i$ is the corresponding left eigenvector to the eigenpair $(\mu_i,y_i)$.
It follows from multiplying the first block-row of equation system~\eqref{eq:J_der} from the left with $w_i^H$.
The result is a special case of well known perturbation analysis for generalized eigenvalue problems \cite[Theorem~2.5]{Higham:1998:Structured}. In our case $g'(\lambda)$ is the perturbation of the eigenvalue $\mu$ with respect to $\lambda$ in the GEP~\eqref{eq:B_eig}. More precisely, a perturbation of the matrix $-(B_1+\lambda B_2)$ with the structured perturbation $\varepsilon B_2$.

Specifically, the closed form of $g_i'(\lambda)$ means that the derivative of the NEP~\eqref{eq:nep} can be written in closed form, as
\begin{align*}
M'(\lambda) = A_2 - \frac{w_i^HB_2y_i}{w_i^HB_3y_i}A_3.
\end{align*}
For methods only requiring the first derivative of $M(\lambda)$, the above expression can be used instead of \eqref{eq:J_der}. However, that requires the computations of the left eigenvector of the GEP.
We will need the expression for theoretical purposes in Section~\ref{sec:conditioning}.
\end{remark}

\begin{algorithm}
\caption{Augmented Newton method for nonlinearized two-parameter eigenvalue problem\label{alg:augnewton}}
\SetKwInOut{Input}{input}\SetKwInOut{Output}{output}
\Input{Starting values $\lambda_0\in\CC$ and $x_0\in\CC^n$}
\Output{Approximations of eigenpairs of \eqref{eq:nep}}
\BlankLine
\nl \For{$k=1,\ldots,$}{
  \nl Compute $g_i(\lambda_k):=\mu$ from the GEP \eqref{eq:B_eig} with $c$-normalized eigenvector $y\in\CC^m$  \label{step:gi}\\
\nl \If{$\|A(\lambda_k,\mu)x_k\|\le$TOL}{\nl \bf break}
\nl Compute $g_i'(\lambda_k)$ by computing $b_1=B_2y$ and solving the linear system of equations \eqref{eq:J_der} \label{step:gip}\\
\nl Compute $u=M(\lambda_k)^{-1}M'(\lambda_k)x_k$ by using the results in Steps~\ref{step:gi}--\ref{step:gip}\\
\nl Compute $\alpha_k=(d^Tu)^{-1}$\\
\nl Compute $x_{k+1}=\alpha_k y$\\
\nl Compute $\lambda_{k+1}=\lambda_{k}-\alpha_k^{-1}$\\
}
\end{algorithm}

The family of methods in \cite{Jarlebring:2012:INFARNOLDI,Jarlebring:2017:TIAR,Mele2018} (flavors of the infinite Arnoldi method)  also requires derivative information.
These methods require computation of quantities such as
\begin{eqnarray*}
  z_0&=&M(\sigma)^{-1}(M'(\sigma)x_1+\cdots+M^{(p)}(\sigma)x_p)   \\
        &=&M(\sigma)^{-1}(A_1x_1+A_2\sum_{j=1}^pg^{(j)}(\sigma)x_j),
\end{eqnarray*}
where $x_1,\dots,x_p$ are given vectors.
The computation requires higher derivatives of $g_i$. However, $\sigma$ is unchanged throughout
the iteration and therefore the matrix in the linear system for derivative computation \eqref{eq:J_der}
is unchanged. Hence, all needed derivatives can be computed by solving
an additional linear system. If  $m \ll n$, this will in general
not be computationally demanding.
We also note that these fixed-shift methods choose a branch $g_i$ in the initial solution of the GEP~\eqref{eq:B_eig}, and then stay on that branch. Hence, convergence properties will depend on the convergence radius of that function $g_i$, as mentioned in the end of Section~\ref{sec:existence_equivalence}.

\subsection{Projection methods}\label{sec:proj}

Many NEP-algorithms require the computation of a projected problem
\begin{equation}\label{eq:proj_eq}
W^TM(\lambda)Vz=0
\end{equation}
where $V,W\in\CC^{n\times p}$ are orthogonal matrices.
The problem \eqref{eq:proj_eq} is again a NEP, but of smaller
size. This can be viewed as a Petrov--Galerkin
projection of the spaces spanned by the columns of $V$ and $W$.
The projection is sometimes called subspace acceleration (or
the nonlinear Rayleigh--Ritz procedure), since it
is often used to improve properties
of a more basic algorithm, e.g., the nonlinear
Arnoldi method \cite{Voss:2004:ARNOLDI},
Jacobi--Davidson methods \cite{Effenberger2013,Betcke:2004:JD},
block preconditioned harmonic projection methods \cite{Xue:2018:BLOCK}, the infinite Lanczos method \cite{mele:2018:infinite},
and many more.

In order to give access to these methods, we need
to provide a way to solve \eqref{eq:proj_eq} for our
nonlinearized problem.
Fortunately, the projected problem stemming
from the nonlinearized two-parameter eigenvalue problem,
i.e.,
\begin{equation} \label{eq:proj}
 (W^TA_1V+\lambda W^TA_2V+g_i(\lambda)W^TA_3V)z=0,
\end{equation}
has a structure which suggests straightforward
methods for the projected problem.
This is because the projected NEP has the same
structure as the nonlinearized two-parameter eigenvalue problem,
and can therefore be lifted back to a two-parameter eigenvalue problem,
but now of much smaller size.
We can then use general methods for two-parameter eigenvalue
problems.
This is directly  observed from the
fact that \eqref{eq:proj}
is the nonlinearization of a two-parameter eigenvalue
problem with projected $A$-matrices. It is made more
precise in the following result.

\begin{corollary}[Projected nonlinearized problem]\label{thm:proj}
Suppose the quadruplet $(\lambda,z,\mu,y)\in\CC\times\CC^p\times\times \CC\times\CC^m$ is
such that $c^Ty=1$ and suppose $J(\lambda,\mu,y)$ with $J$
as defined in \eqref{eq:Jdef} is nonsingular. Then,
$(\lambda,z,\mu,y)$ is a solution to the
  the two-parameter eigenvalue problem
  \begin{subequations}\label{eq:proj2EP}
  \begin{eqnarray}
    0&=&W^TA_1Vz+\lambda W^TA_2Vz+\mu W^TA_3Vz \label{eq:proj2EP_a}  \\
    0&=&B_1y+\lambda B_2y+\mu B_3y \label{eq:proj2EP_b}
  \end{eqnarray}
  \end{subequations}
if and only if $(\lambda,z)$ is a solution
to \eqref{eq:proj} for one pair of
    functions $(g(\lambda),y(\lambda))=(\mu,y)$
    which satisfies \eqref{eq:nlsys}.
  \end{corollary}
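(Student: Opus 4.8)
The plan is to observe that Corollary~\ref{thm:proj} is simply Theorem~\ref{thm:equivalence} applied to a new two-parameter eigenvalue problem, obtained from \eqref{eq:2ep} by replacing the matrices $A_1,A_2,A_3$ with their projected counterparts $\widetilde A_i := W^TA_iV$ while leaving the $B$-equation untouched. The decisive point is that \eqref{eq:proj2EP_b} coincides with \eqref{eq:2ep_b}. Hence the analytic function $f$ in \eqref{eq:fun_impthm}, the Jacobian $J$ in \eqref{eq:Jdef}, the normalization \eqref{eq:nlsys_b}, and the implicit functions $g_i,y_i$ of Lemma~\ref{thm:existence} are literally the same objects as in the unprojected setting; none of them involves the $A$-block. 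In particular, the hypotheses $c^Ty=1$ and nonsingularity of $J(\lambda,\mu,y)$ are exactly the hypotheses of Theorem~\ref{thm:equivalence}, so they transfer without modification.

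First I would check that the nonlinearization of the projected two-parameter problem \eqref{eq:proj2EP} is precisely the projected NEP \eqref{eq:proj}. Substituting $\mu=g_i(\lambda)$ into \eqref{eq:proj2EP_a} gives $(\widetilde A_1+\lambda\widetilde A_2+g_i(\lambda)\widetilde A_3)z=0$, which by the definition of $\widetilde A_i$ is exactly \eqref{eq:proj}; the operator $W^TA_1V+\lambda W^TA_2V+g_i(\lambda)W^TA_3V$ plays the role of $M(\lambda)$ from \eqref{eq:nep}. This identification is the only place where the projection enters, and it is purely formal.

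With these correspondences established, I would invoke Theorem~\ref{thm:equivalence} verbatim, reading $z\in\CC^p$ in place of $x\in\CC^n$ and $\widetilde A_i$ in place of $A_i$. The theorem then states that $(\lambda,z,\mu,y)$ solves \eqref{eq:proj2EP} if and only if $(\lambda,z)$ solves \eqref{eq:proj} for one pair $(g(\lambda),y(\lambda))=(\mu,y)$ satisfying \eqref{eq:nlsys}, which is precisely the assertion of the corollary. I do not anticipate a genuine obstacle; the one point demanding care is the bookkeeping that the projection touches only the $A$-equation, so that every object attached to the $B$-equation---and thus every hypothesis of Lemma~\ref{thm:existence} and Theorem~\ref{thm:equivalence}---is inherited unchanged. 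Once that is made explicit, the statement is a direct corollary rather than a fresh proof.
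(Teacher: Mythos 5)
Your proposal is correct and follows exactly the paper's route: the paper's proof is the single observation that Corollary~\ref{thm:proj} follows from applying Theorem~\ref{thm:equivalence} to the projected problem \eqref{eq:proj2EP} and the NEP \eqref{eq:proj}, which is precisely your argument with $W^TA_iV$ in place of $A_i$ and $z\in\CC^p$ in place of $x$. Your additional bookkeeping---that the $B$-equation, the Jacobian $J$, and the implicit functions $g_i,y_i$ are untouched by the projection, so the hypotheses transfer verbatim---is left implicit in the paper but is the same reasoning.
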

\begin{proof}
This follows directly from the application of Theorem~\ref{thm:equivalence}
on the projected problem \eqref{eq:proj2EP} and the NEP \eqref{eq:proj}.
\end{proof}

If the projection space is small
$p\ll n$, and $m\ll n$, we may even solve the two-parameter
eigenvalue problem using the operator determinant
eigenvalue equations \eqref{eq:deltaeqs} or \cite[Algorithm~2.3]{Hochstenbach:2005:TWOPARAMETER}.

The situation $p=1$ implies that the projected problem
is a scalar problem, and reduces to the so-called Rayleigh functional. There
are several methods based on the Rayleigh functional, e.g., residual inverse iteration \cite{Neumaier:1985:RESINV}, and variational principle based approaches
such as \cite{Szyld:2016:PRECOND} and references therein.
The fact that the projected problem is scalar and linear allows us
to eliminate $\mu$, and we find that $\lambda$ is a solution to the generalized eigenvalue problem.
The following corollary specifies the formulas more precisely, and the adaption
of the result into the residual inverse iteration is
given in Algorithm~\ref{alg:resinv}.

\begin{corollary}
The solution to the projected NEP \eqref{eq:proj} with $p=1$ is given
by $\lambda$, $\mu\in\CC$ and $y\in\CC^m$, where $(\lambda,y)$ is
a solution to the GEP
\begin{equation}  \label{eq:scalar_proj}
     ((w^TA_3v)B_1-(w^TA_1v) B_3)y=\lambda( (w^TA_2v)B_3 - (w^TA_3v)B_2)y,
\end{equation}
and $\mu$ is given by
\begin{equation}\label{eq:scalar_proj_mu}
\mu=-\frac{w^TA_1v+\lambda w^TA_2v}{w^TA_3v}.
\end{equation}
\end{corollary}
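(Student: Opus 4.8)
The plan is to combine the equivalence already available for the projected problem with the collapse of the first equation to a scalar relation when $p=1$. First I would invoke Corollary~\ref{thm:proj}: under its hypotheses ($c^Ty=1$ and $J(\lambda,\mu,y)$ nonsingular), a solution $(\lambda,z)$ of the projected NEP~\eqref{eq:proj} for a pair $(g(\lambda),y(\lambda))=(\mu,y)$ satisfying \eqref{eq:nlsys} is equivalent to a solution $(\lambda,z,\mu,y)$ of the small two-parameter eigenvalue problem~\eqref{eq:proj2EP}. This reduces the entire task to reading off $\lambda$, $\mu$ and $y$ from \eqref{eq:proj2EP} in the special case $p=1$.

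Next, since $p=1$ the quantities $W^TA_jV=w^TA_jv$ are scalars, so \eqref{eq:proj2EP_a} reads $(w^TA_1v+\lambda w^TA_2v+\mu w^TA_3v)z=0$. A nontrivial solution forces $z\neq 0$, so the scalar prefactor must vanish, and solving the resulting linear equation for $\mu$ yields precisely \eqref{eq:scalar_proj_mu}. Here the only non-routine point enters: this elimination is well defined only when $w^TA_3v\neq 0$, which is implicit in the denominator of \eqref{eq:scalar_proj_mu} and should be taken as a standing assumption; it fails only in the degenerate situation that the chosen projection directions annihilate $A_3$.

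Finally I would substitute this closed form for $\mu$ into the $B$-equation~\eqref{eq:proj2EP_b}, multiply through by the scalar $w^TA_3v$ to clear the denominator, and collect the terms independent of $\lambda$ and those linear in $\lambda$. This gives
\[
((w^TA_3v)B_1-(w^TA_1v)B_3)y=\lambda((w^TA_2v)B_3-(w^TA_3v)B_2)y,
\]
which is exactly the GEP~\eqref{eq:scalar_proj}, so $(\lambda,y)$ solves the stated problem (with $y$ normalized by $c^Ty=1$ inherited from Corollary~\ref{thm:proj}). Because every step is a reversible algebraic manipulation, running the substitution backwards recovers a solution of \eqref{eq:proj2EP}, and hence of \eqref{eq:proj} via the ``if and only if'' of Corollary~\ref{thm:proj}, which establishes the converse direction. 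I expect no genuine obstacle beyond bookkeeping; the one point to flag carefully is the nonvanishing of $w^TA_3v$, since it is what makes both the formula for $\mu$ and the elimination step simultaneously well defined.
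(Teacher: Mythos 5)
Your proposal matches the paper's proof: both invoke Corollary~\ref{thm:proj} with $p=1$, solve the scalar equation \eqref{eq:proj2EP_a} for $\mu$ to get \eqref{eq:scalar_proj_mu}, and substitute into \eqref{eq:proj2EP_b} to obtain the GEP \eqref{eq:scalar_proj}. Your explicit flagging of the condition $w^TA_3v\neq 0$ is a reasonable addition that the paper leaves implicit in the denominator of \eqref{eq:scalar_proj_mu}.
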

\begin{proof}
  This is derived from a  special case of Corollary~\ref{thm:proj} where $p=1$. The relation
  \eqref{eq:proj2EP_a} with $W=w$ and $V=v$ can be solved for $\mu$
  resulting in the relation \eqref{eq:scalar_proj_mu}. By inserting this relation into
  \eqref{eq:proj2EP_b} we obtain the GEP \eqref{eq:scalar_proj}.
  \end{proof}
\begin{algorithm}
\caption{Resinv for nonlinearized two-parameter eigenvalue problem\label{alg:resinv}}
\SetKwInOut{Input}{input}\SetKwInOut{Output}{output}
\Input{Approximate eigenvector $x_0\in\CC^n$, shift $\sigma\in\CC$, right Rayleigh functional vector $w\in\CC^n$}
\Output{Approximations of eigenpairs of \eqref{eq:nep}}
\BlankLine
\nl Compute $M(\sigma)$ and factorize\label{step:factorize}\\
\While{not converged}{
\nl Compute $\lambda_{k+1}=\lambda$ by solving the GEP \eqref{eq:scalar_proj} for $v=x_k$ . \\
\nl Compute $\mu$ from \eqref{eq:scalar_proj_mu} with $v=x_k$\\
\nl Compute $z:=M(\lambda_{k+1})x_k=A_0x_k+\lambda_{k+1}A_1x_k+\mu A_2x_k$.\\
\nl Compute correction $u_{k+1}=x_k-M(\sigma)^{-1}z$ using the factorization
computed in Step~\ref{step:factorize}\\
\nl Normalize $x_{k+1}=u_{k+1}/\|u_{k+1}\|$
}
\end{algorithm}

\section{Conditioning and accuracy}\label{sec:conditioning}
In order to characterize when the elimination procedure works well, we now analyze how
the technique behaves subject to perturbations. As a consequence of this we can directly conclude how backward stable computation of $g$ influences the accuracy (Section~\ref{sec:backward}).\footnote{For notational convenience the $i$ index on $g_i$ is dropped in this section.}

\subsection{Conditioning as a nonlinear eigenvalue problem}
Standard results for the condition number of NEPs can be used to analyze perturbations with respect to the $A$-matrices. More precisely, if we define
\begin{align*}
\kappa_A(\lambda) := \limsup_{\varepsilon\rightarrow 0}
\left\{\frac{|\Delta\lambda|}{\varepsilon}: \|\Delta A_j\|\leq\varepsilon\alpha_j,j=1,2,3 \right\},
\end{align*}
where $\alpha_j$ are scalars for $j=1,2,3$, and $\Delta\lambda$ is such that
\begin{align}\label{eq:nep_pert}
0 = (A_1+\Delta A_1+(\lambda+\Delta\lambda)(A_2+\Delta A_2)+g(\lambda+\Delta\lambda)(A_3+\Delta A_3))(x+\Delta x),
\end{align}
then we know (see, e.g., \cite{Alam:2019:SENSITIVITY}) that
\begin{align}\label{eq:kappa_A}
\kappa_A(\lambda) =  \|v\|\|x\| \frac{\alpha_1 + |\lambda|\alpha_2  + |g(\lambda)| \alpha_3}{|v^H M'(\lambda)x|},
\end{align}
where $v,x$ are the corresponding left and right eigenvectors.
In the following we will establish how this formula is modified when we also consider perturbations in the $B$-matrices.
Note that this implies that the function $g$ is also perturbed and we cannot directly use the standard result.
We therefore define the condition number
\begin{align*}
\kappa(\lambda) := \limsup_{\varepsilon\rightarrow 0}
\left\{\frac{|\Delta\lambda|}{\varepsilon}: \|\Delta A_j\|\leq\varepsilon\alpha_j,j=1,2,3 \text{ and }\|\Delta B_j\|\leq\varepsilon\beta_j,j=1,2,3\right\},
\end{align*}
where $\beta_j$ are scalars for $j=1,2,3$, and $\Delta\lambda$ fulfills \eqref{eq:nep_pert} but with a perturbed $g$, i.e., $\mu+\Delta\mu = g(\lambda+\Delta\lambda)$, such that
\begin{subequations}\label{eq:gep_pert}
\begin{align}
  0&= \left(B_1+\Delta B_1 + (\lambda+\Delta \lambda)(B_2+\Delta B_2) + (\mu+\Delta \mu)(B_3+\Delta B_3)\right)(y+\Delta y) \label{eq:gep_pert_a}\\
  1&=c^T (y+\Delta y). \label{eq:gep_pert_b}
\end{align}
\end{subequations}
The definitions can be used both for \emph{absolute} and \emph{relative condition} numbers by setting $\alpha_j=\beta_j = 1$ or $\alpha_j = \|A_j\|$, $\beta_j = \|B_{j}\|$ for $j=1,2,3$ respectively.

As an intermediate step we first consider the perturbation of $\mu$ subject to perturbations in the $B$-matrices and fixed perturbations in $\lambda$ by analyzing
\begin{align*}
\kappa_g(\lambda):=\limsup_{\varepsilon\rightarrow 0}
\left\{\frac{|\Delta \mu|}{\varepsilon}: |\Delta\lambda|\leq\varepsilon\gamma \text{ and } \|\Delta B_j\|\leq\varepsilon\beta_j, j=1,2,3\right\},
\end{align*}
where $\gamma$ is a scalar, and $\Delta\mu$ satisfies \eqref{eq:gep_pert} for a given $\lambda$ . The following result shows that $\kappa_g$ can be expressed as a sum of perturbations associated with the $B$-matrices and perturbations associated with $\lambda$.

\begin{lemma}\label{lemma:g_cond}
Let $\lambda\in\CC$ be given and suppose $\mu=g(\lambda)$ is a simple eigenvalue of the GEP~\eqref{eq:B_eig} with $w$ and $y$ being corresponding left and right eigenvectors. Then,
\begin{align*}
\kappa_g(\lambda) = \kappa_{g,B}(\lambda) + \kappa_{g,\lambda}(\lambda),
\end{align*}
where
\begin{align*}
\kappa_{g,B}(\lambda) = \|w\|\|y\|\frac{ \beta_1  + |\lambda| \beta_2 +  |g(\lambda)| \beta_3}{|w^H B_3 y|}
\qquad\text{ and }\qquad
\kappa_{g,\lambda}(\lambda) =
\gamma \frac{|w^H B_2 y|}{|w^H B_3 y|}.
\end{align*}
  \end{lemma}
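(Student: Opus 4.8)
The plan is to carry out a standard first-order perturbation analysis of the simple eigenvalue $\mu=g(\lambda)$ of the pencil $B(\lambda,\mu)=B_1+\lambda B_2+\mu B_3$, and then to optimize the resulting first-order expression over the admissible perturbations. First I would linearize the perturbed eigenvalue equation \eqref{eq:gep_pert_a}. Collecting the linear terms, the perturbed operator is $B(\lambda,\mu)+\Delta B$ with
\[
\Delta B=\Delta B_1+\lambda\,\Delta B_2+\mu\,\Delta B_3+\Delta\lambda\,B_2+\Delta\mu\,B_3+O(\varepsilon^2),
\]
and inserting $y+\Delta y$ the zeroth-order term vanishes because $B(\lambda,\mu)y=0$, leaving the first-order identity $B(\lambda,\mu)\Delta y+(\Delta B)y=0$ up to $O(\varepsilon^2)$.

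The key step is to eliminate the unknown eigenvector perturbation $\Delta y$. Left-multiplying by the left eigenvector $w^H$ and using $w^HB(\lambda,\mu)=0$ annihilates the $B(\lambda,\mu)\Delta y$ term, giving $w^H(\Delta B)y=0$. Solving for $\Delta\mu$ yields
\[
\Delta\mu=-\frac{w^H(\Delta B_1+\lambda\,\Delta B_2+\mu\,\Delta B_3)y}{w^HB_3y}-\Delta\lambda\,\frac{w^HB_2y}{w^HB_3y}+O(\varepsilon^2).
\]
Simplicity of $\mu$ guarantees $w^HB_3y\neq 0$, so the denominator is nonzero; note also that the second term equals $\Delta\lambda\,g'(\lambda)$ by Remark~\ref{rem:g_der}, which already identifies $\kappa_{g,\lambda}(\lambda)=\gamma|g'(\lambda)|$. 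The normalization \eqref{eq:gep_pert_b} is not needed here, since it only fixes the component of $\Delta y$ along $y$ and drops out after multiplication by $w^H$.

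For the upper bound I would apply the triangle inequality together with $|w^H(\Delta B_j)y|\le\|w\|\,\|\Delta B_j\|\,\|y\|\le\varepsilon\beta_j\|w\|\|y\|$ and $|\Delta\lambda|\le\varepsilon\gamma$, divide by $\varepsilon$, and pass to the limit, which gives $\kappa_g(\lambda)\le\kappa_{g,B}(\lambda)+\kappa_{g,\lambda}(\lambda)$. The main work is the reverse inequality, i.e.\ showing the bound is attained. For this I would exhibit worst-case perturbations $\Delta B_j=\varepsilon\beta_j e^{i\phi_j}\,w y^H/(\|w\|\|y\|)$, which are rank one, satisfy $\|\Delta B_j\|=\varepsilon\beta_j$ exactly, and give $w^H(\Delta B_j)y=\varepsilon\beta_j e^{i\phi_j}\|w\|\|y\|$; combined with $\Delta\lambda=\varepsilon\gamma e^{i\psi}$, one can choose the phases $\phi_1,\phi_2,\phi_3,\psi$ so that all four contributions to $\Delta\mu$ share a common argument and the triangle inequality becomes an equality to leading order.

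The delicate point, and the step I expect to be the main obstacle, is the rigorous passage from the first-order identity to the \emph{limsup}: one must justify that the $O(\varepsilon^2)$ remainder is genuinely negligible so that the optimal first-order coefficient coincides with $\kappa_g(\lambda)$. This follows because $\mu$ is a simple eigenvalue and therefore depends analytically on the entries of the pencil and on $\lambda$, by the implicit-function-theorem argument underlying Lemma~\ref{thm:existence}; hence the leading-order optimization is valid and the two one-sided bounds combine to the claimed equality.
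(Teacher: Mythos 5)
Your proposal is correct and follows essentially the same route as the paper: linearize the perturbed pencil, left-multiply by $w^H$ to kill the $B(\lambda,\mu)\Delta y$ term, solve for $\Delta\mu$, bound by the triangle inequality, and attain the bound with rank-one perturbations proportional to $wy^H/(\|w\|\|y\|)$ with aligned phases. The paper simply writes out the explicit phase factors (e.g.\ $\overline{\lambda}/|\lambda|$, $\overline{g(\lambda)}/|g(\lambda)|$) instead of leaving them as $e^{i\phi_j}$, and your observation that the second term is $\Delta\lambda\, g'(\lambda)$ matches Remark~\ref{rem:g_der}.
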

\begin{proof}
Since $\mu$ is a simple eigenvalue of the GEP~\eqref{eq:B_eig},
the eigenvalue and eigenvector are analytic, and therefore $\Delta y = \OOO(\varepsilon)$ when all the perturbations are $\OOO(\varepsilon)$. By collecting all the higher order terms the perturbed GEP~\eqref{eq:gep_pert_a} can thus be written as
\begin{align*}
\left(\Delta B_1 + \lambda\Delta B_2 + \Delta\lambda B_2 + \mu\Delta B_3 + \Delta \mu B_3\right)y
+
B(\lambda,\mu)\Delta y
=
\OOO(\varepsilon^2).
\end{align*}
Multiplying with $w^H$ from the left, solving for $\Delta\mu$, and dividing with $\varepsilon$ gives that
\begin{align}\label{eq:delta_mu}
\frac{\Delta \mu}{\varepsilon}
= - \frac{ w^H \Delta  B_1 y + \lambda w^H\Delta B_2 y + \Delta\lambda w^H B_2 y +  \mu w^H \Delta B_3 y}{\varepsilon w^H B_3 y}
+ \OOO(\varepsilon).
\end{align}
An upper bound is thus found as
\begin{align*}
\frac{\Delta \mu}{\varepsilon}
\leq \|w\|\|y\|\frac{ \beta_1  + |\lambda| \beta_2 +  |\mu| \beta_3}{|w^H B_3 y|}
+
\eta\frac{|w^H B_2 y|}{|w^H B_3 y|}
+ \OOO(\varepsilon).
\end{align*}
It remains to show that the bound can be attained.
This follows from considering $\hat B = wy^H/\|w\|\|y\|$, and inserting
\begin{align*}
&\Delta B_1 = -\varepsilon \beta_1\hat B
&&\Delta B_2 = -\varepsilon \frac{\overline{\lambda}}{|\lambda|} \beta_2\hat B
\\
&\Delta B_3 = -\varepsilon \frac{\overline{g(\lambda)}}{|g(\lambda)|} \beta_3\hat B
&&\ \ \Delta \lambda = -\varepsilon \frac{\overline{w^H B_2 y}}{|w^H B_2 y|}\frac{|w^H B_3 y|}{\overline{w^H B_3 y}} \eta,
\end{align*}
into \eqref{eq:delta_mu}.
\end{proof}

Using the intermediate result we can now show that the condition number $\kappa(\lambda)$ is the sum of the standard condition number of NEPs and a term representing perturbations in $g$ generated by perturbations in the $B$-matrices, i.e., $\kappa_{g,B}(\lambda)$.

\begin{theorem}\label{thm:cond}
Let $\lambda$ be a simple eigenvalue of the NEP~\eqref{eq:nep} with $v$ and $x$ being corresponding left and right eigenvectors. Moreover, for this $\lambda$, suppose $\mu=g(\lambda)$ is a simple eigenvalue of the GEP~\eqref{eq:B_eig} with $w$ and $y$ being corresponding left and right eigenvectors. Then,
\begin{align*}
\kappa(\lambda) =  \kappa_A(\lambda)
+
\kappa_{g,B}(\lambda)
\frac{|v^H A_3 x|}{|v^H M'(\lambda)x|},
\end{align*}
where $\kappa_A(\lambda)$ is given by \eqref{eq:kappa_A}.
\end{theorem}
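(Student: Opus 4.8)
The plan is to linearize the perturbed NEP~\eqref{eq:nep_pert} about the eigentriple $(\lambda,\mu,x)$ and project onto the left eigenvector to eliminate $\Delta x$, exactly as in the derivation of \eqref{eq:kappa_A}, but now tracking the extra dependence of $g$ on the $B$-matrices. First I would insert the perturbed value $\mu+\Delta\mu=g(\lambda+\Delta\lambda)$ as the third coefficient in \eqref{eq:nep_pert} and expand to first order in $\varepsilon$. The zeroth-order term drops since $M(\lambda)x=A(\lambda,\mu)x=0$, and multiplying from the left by $v^H$ annihilates the $A(\lambda,\mu)\Delta x$ contribution because $v^H M(\lambda)=0$. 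This leaves the eigenvector-free scalar relation
\[
0 = v^H\Delta A_1 x + \lambda\, v^H\Delta A_2 x + \mu\, v^H\Delta A_3 x + \Delta\lambda\, v^H A_2 x + \Delta\mu\, v^H A_3 x + \OOO(\varepsilon^2).
\]

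The key step is to resolve $\Delta\mu$. From \eqref{eq:delta_mu} in the proof of Lemma~\ref{lemma:g_cond}, $\Delta\mu$ splits additively into a part proportional to $\Delta\lambda$ and a part driven by the $B$-perturbations, namely $\Delta\mu=\Delta\lambda\, g'(\lambda)+\Delta\mu_B+\OOO(\varepsilon^2)$, where $g'(\lambda)=-w^H B_2 y/(w^H B_3 y)$ by Remark~\ref{rem:g_der} and $\Delta\mu_B=-(w^H\Delta B_1 y+\lambda w^H\Delta B_2 y+\mu w^H\Delta B_3 y)/(w^H B_3 y)$. Substituting this, the two $\Delta\lambda$-terms recombine as $\Delta\lambda\, v^H(A_2+g'(\lambda)A_3)x=\Delta\lambda\, v^H M'(\lambda)x$, again by Remark~\ref{rem:g_der}. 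Solving for $\Delta\lambda$ then gives
\[
\frac{\Delta\lambda}{\varepsilon} = -\frac{1}{v^H M'(\lambda)x}\Big( v^H\tfrac{\Delta A_1}{\varepsilon}x + \lambda\, v^H\tfrac{\Delta A_2}{\varepsilon}x + \mu\, v^H\tfrac{\Delta A_3}{\varepsilon}x + \tfrac{\Delta\mu_B}{\varepsilon}\, v^H A_3 x\Big) + \OOO(\varepsilon).
\]
It is worth remarking that the $\kappa_{g,\lambda}$ contribution of Lemma~\ref{lemma:g_cond} never appears on its own: the sensitivity of $\mu$ to $\Delta\lambda$ has been absorbed into $M'(\lambda)$, which is exactly why only $\kappa_{g,B}$ survives in the final formula.

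For the upper bound I would apply Cauchy--Schwarz to each $A$-term, using $\|\Delta A_j\|\le\varepsilon\alpha_j$ and $|\mu|=|g(\lambda)|$, to recover the numerator $\|v\|\|x\|(\alpha_1+|\lambda|\alpha_2+|g(\lambda)|\alpha_3)$ of $\kappa_A(\lambda)$, and bound the last term by $|\Delta\mu_B|\,|v^H A_3 x|\le\varepsilon\,\kappa_{g,B}(\lambda)\,|v^H A_3 x|$, where the bound on $\Delta\mu_B$ is precisely the $B$-part of Lemma~\ref{lemma:g_cond}. Dividing by $|v^H M'(\lambda)x|$ yields $\kappa(\lambda)\le\kappa_A(\lambda)+\kappa_{g,B}(\lambda)\,|v^H A_3 x|/|v^H M'(\lambda)x|$.

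The main obstacle is attainment (the matching lower bound): I must exhibit admissible perturbations that simultaneously saturate every triangle inequality. I would take rank-one $\Delta A_j$ proportional to $v x^H/(\|v\|\|x\|)$, so that each $|v^H\Delta A_j x|$ meets its Cauchy--Schwarz maximum $\varepsilon\alpha_j\|v\|\|x\|$, and take the $\Delta B_j$ from the attainment construction in the proof of Lemma~\ref{lemma:g_cond}, so that $|\Delta\mu_B|=\varepsilon\,\kappa_{g,B}(\lambda)$. The delicate point is phase alignment: the four numerator terms must share a common argument so that their moduli add rather than partially cancel. Since each $\Delta A_j$ carries an independent unimodular phase, and the entire $B$-block may be multiplied by a free common phase (which leaves every $\|\Delta B_j\|$ unchanged while rotating $\Delta\mu_B$), there are enough degrees of freedom to rotate all four terms into phase. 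Verifying that these choices are mutually compatible --- in particular that the internal phase choices fixing $|\Delta\mu_B|$ in Lemma~\ref{lemma:g_cond} do not conflict with the overall rotation needed here --- is the crux, after which the upper bound is met with equality and the claimed identity follows.
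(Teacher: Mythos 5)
Your proposal is correct and follows essentially the same route as the paper: linearize \eqref{eq:nep_pert}, project with $v^H$, substitute the expansion \eqref{eq:delta_mu} of $\Delta\mu$ so that the $\Delta\lambda$-sensitivity of $g$ is absorbed into $v^HM'(\lambda)x$ via Remark~\ref{rem:g_der}, bound by Cauchy--Schwarz, and attain the bound with rank-one perturbations built from $vx^H$ and $wy^H$. Your explicit observation that a common unimodular rotation of the whole $B$-block aligns the $\Delta\mu_B$ term with the $A$-terms without disturbing the internal alignment from Lemma~\ref{lemma:g_cond} is sound, and is in fact spelled out more carefully than in the paper's own attainment step.
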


\begin{proof}
Recall the assumptions that the NEP~\eqref{eq:nep}, i.e., $M$, is analytic, that $\lambda$ is a simple eigenvalue of the NEP, and that $\mu$ is a simple eigenvalue of the GEP~\eqref{eq:B_eig}. Hence,
the eigenvalues and eigenvectors are analytic, and therefore $\Delta x = \OOO(\varepsilon)$ when all the perturbations are $\OOO(\varepsilon)$. By using that $g(\lambda+\Delta\lambda)=g(\lambda)+\Delta\mu$ and collecting all the higher order terms, the perturbed NEP~\eqref{eq:nep_pert} can therefore
be written as
\begin{align*}
(\Delta A_1+\lambda\Delta A_2+\Delta\lambda A_2+g(\lambda)\Delta A_3+\Delta\mu A_3)x
+
M(\lambda)\Delta x
=
\OOO(\varepsilon^2).
\end{align*}
Multiplying with $v^H$ from the left, expanding $\Delta\mu$ according to~\eqref{eq:delta_mu}, solving for $\Delta\lambda$, and dividing with $\varepsilon$, gives that
\begin{align}\label{eq:delta_lambda}
\frac{\Delta\lambda}{\varepsilon}  = - \frac{v^H\Delta A_1x +\lambda v^H\Delta A_2x + g(\lambda) v^H\Delta A_3 x
+\theta_{g,B}(\lambda)
v^H A_3 x}{\varepsilon v^H\left(A_2 - \frac{w^HB_2y}{w^HB_3y}A_3\right)x}
+
\OOO(\varepsilon),
\end{align}
where
$\theta_{g,B}(\lambda) := -
( w^H \Delta  B_1 y + \lambda w^H\Delta B_2 y +  g(\lambda) w^H \Delta B_3 y)/(w^H B_3 y)$.
Based on Remark~\ref{rem:g_der} we observe that the denominator of \eqref{eq:delta_lambda} is equal to $\varepsilon v^H M'(\lambda)x$. An upper bound is is therefore
\begin{align*}
\frac{\Delta\lambda}{\varepsilon}  \leq  \|v\|\|x\| \frac{\alpha_1 + |\lambda|\alpha_2  + |g(\lambda)| \alpha_3}{|v^H M'(\lambda)x|}
+
\kappa_{g,B}(\lambda)
\frac{
|v^H A_3 x|}{|v^H M'(\lambda)x|}
+
\OOO(\varepsilon).
\end{align*}
It remains to show that the bound can be attained. Similar to
the proof of Lemma~\ref{lemma:g_cond}, this
follows from considering $\hat B = wy^H/\|w\|\|y\|$ and $\hat A = vx^H/\|v\|\|x\|$, and inserting
\begin{align*}
&\Delta B_1 = \varepsilon \beta_1\hat B
\qquad
&&\Delta B_2 = \varepsilon \frac{\overline{\lambda}}{|\lambda|} \beta_2\hat B
\qquad
&&\Delta B_3 = \varepsilon \frac{\overline{g(\lambda)}}{|g(\lambda)|} \beta_3\hat B\\
&\Delta A_1 = -\varepsilon \alpha_1\hat A
\qquad
&&\Delta A_2 = -\varepsilon \frac{\overline{\lambda}}{|\lambda|} \alpha_2\hat A
\qquad
&&\Delta A_3 = -\varepsilon \frac{\overline{g(\lambda)}}{|g(\lambda)|} \alpha_3\hat A,
\end{align*}
into \eqref{eq:delta_lambda}.
\end{proof}

\subsection{Backward stable computation of $g$}\label{sec:backward}
The nonlinearization is based on solving a GEP to evaluate the function $g(\lambda)$.
We analyze the effects on the accuracy in the computed $\lambda$ when the GEP is solved numerically
with a backward stable method. The analysis assumes the two triplets $(\lambda,x,v)\in\CC\times\CC^n\times\CC^n$ and $(\mu,y,w)\in\CC\times\CC^m\times\CC^m$ are such that $\lambda$ is a simple eigenvalue of the NEP~\eqref{eq:nep}, $\mu$ is a simple eigenvalue of the GEP~\eqref{eq:B_eig}, and $v,w$ and $x,y$ are corresponding left and right eigenvectors respectively.

From the assumption that the GEP~\eqref{eq:B_eig} is solved by a backward stable method we know that $\mu$ can be characterized as the exact solution to a nearby problem. More precisely, $\mu$ solves
\begin{align*}
(C_1+\Delta C_1) y = \mu (C_2+\Delta C_2) y,
\end{align*}
where $C_1 = -(B_1+\lambda B_2)$, $C_2 = B_3$, with perturbations, $\Delta C_1$ and $\Delta C_2$, that are proportional to the errors in our GEP solver.
Specifically, there are non-negative $\beta_1,\beta_3\in\RR$ such that $\|\Delta C_1\| = \beta_1\varepsilon$ and $\|\Delta C_2\| = \beta_3\varepsilon$.
Thus, the perturbation in $g$ is precisely captured by $\kappa_{g,B}(\lambda)$ from Lemma~\ref{lemma:g_cond}, with $\beta_2=0$ and $\beta_1$ and $\beta_3$ given above, i.e., by the specific choice of GEP solver.
Hence, by application of Theorem~\ref{thm:cond} with $\alpha_j=0$ for $j=1,2,3$ we can conclude that the forward error in $\lambda$, induced by the inexact but backward stable computation of $g(\lambda)$ is bounded by
\begin{align}\label{eq:Dl_1}
 |\Delta\lambda| \leq \|w\|\|y\|\frac{ \beta_1  +  |g(\lambda)| \beta_3}{|w^H B_3 y|} \frac{
|v^H A_3 x|}{|v^H M'(\lambda)x|}\varepsilon + \OOO(\varepsilon^2).
\end{align}
Without loss of generality we now assume that $\|x\|=\|v\|=\|y\|=\|w\|=1$.

The upper bound~\eqref{eq:Dl_1} is
related to the condition number for multiparameter eigenvalue problems as follows.
As mentioned in the introduction, the condition number
for the two-parameter eigenvalue problem
can be directly expressed with
the inverse of $C_0$ defined in \eqref{eq:cond_mat}.
First note that our assumptions imply that $C_0$ is invertible.
\begin{lemma}\label{lemma:C_0}
Under the conditions of Theorem~\ref{thm:cond} the matrix $C_0$ is nonsingular, where $C_0$ is defined in \eqref{eq:cond_mat}.
\end{lemma}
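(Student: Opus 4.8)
The plan is to show directly that $\det C_0 \neq 0$, by factoring this $2\times 2$ determinant into a product of two scalar quantities, each of which the hypotheses of Theorem~\ref{thm:cond} force to be nonzero. Writing out the determinant from the definition \eqref{eq:cond_mat} gives
\begin{equation*}
\det C_0 = (v^H A_2 x)(w^H B_3 y) - (v^H A_3 x)(w^H B_2 y).
\end{equation*}
The key manipulation is to pull out the factor $w^H B_3 y$, which yields
\begin{equation*}
\det C_0 = (w^H B_3 y)\left( v^H A_2 x - \frac{w^H B_2 y}{w^H B_3 y}\, v^H A_3 x \right).
\end{equation*}
At this point I would invoke the closed-form expression $M'(\lambda) = A_2 - \frac{w^H B_2 y}{w^H B_3 y} A_3$ from Remark~\ref{rem:g_der}, which identifies the bracketed quantity as precisely $v^H M'(\lambda) x$. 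Hence $\det C_0 = (w^H B_3 y)\,(v^H M'(\lambda) x)$, and nonsingularity of $C_0$ reduces to showing both factors are nonzero.

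For the two factors I would argue as follows. Since $\mu = g(\lambda)$ is by assumption a simple eigenvalue of the GEP~\eqref{eq:B_eig} with left and right eigenvectors $w$ and $y$, the standard first-order perturbation theory for a simple eigenvalue of a generalized eigenvalue problem guarantees $w^H B_3 y \neq 0$; indeed, this is exactly the nondegeneracy that makes $g'(\lambda)$ well-defined in Remark~\ref{rem:g_der}. For the second factor, $\lambda$ is assumed to be a simple eigenvalue of the analytic NEP~\eqref{eq:nep}, and the defining property of a (algebraically) simple NEP eigenvalue is that $v^H M'(\lambda) x \neq 0$ — this is the same nonvanishing denominator that appears in the condition-number formula \eqref{eq:kappa_A}. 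Multiplying two nonzero scalars then gives $\det C_0 \neq 0$, so $C_0$ is invertible.

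The step I expect to require the most care is the bookkeeping that turns the raw determinant into the product $(w^H B_3 y)\,(v^H M'(\lambda) x)$; in particular one must be careful that dividing and multiplying by $w^H B_3 y$ is legitimate, which is itself guaranteed by the simplicity of $\mu$. Everything else is either a direct citation of the simple-eigenvalue nondegeneracy conditions or the substitution of the already-established formula for $M'(\lambda)$. An alternative, more ``black-box'' route would be to connect the hypotheses to algebraic simplicity of the underlying two-parameter eigenvalue $(\lambda,\mu)$ and then quote the known result (e.g.\ \cite{Kosir:1994:FINITE,Hochstenbach:2005:TWOPARAMETER,Hochstenbach:2003:BACKWARD}) that $C_0$ is nonsingular for a simple two-parameter eigenvalue; however, the factorization argument above is self-contained and keeps the proof entirely within the quantities already introduced in this section, so I would prefer it.
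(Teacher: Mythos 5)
Your proposal is correct and follows essentially the same route as the paper: the paper's proof also establishes the identity $\det(C_0) = (w^H B_3 y)\,(v^H M'(\lambda) x)$ via the closed form of $M'(\lambda)$ from Remark~\ref{rem:g_der}, and then concludes by noting that simplicity of $\mu$ for the GEP and of $\lambda$ for the NEP force both factors to be nonzero. The only cosmetic difference is that you factor out $w^H B_3 y$ explicitly before substituting, whereas the paper states the product identity directly; the content is identical.
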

\begin{proof}
By using the expression for $M'(\lambda)$ from Remark~\ref{rem:g_der} we thus have
\begin{align}\label{eq:det_C0}
(w^HB_3y)(v^HM'(\lambda)x)
= (v^HA_2x)(w^HB_3y)-(v^HA_3x)(w^HB_2y)
=\det(C_0).
\end{align}
Since the eigenvalues $\lambda$ and $\mu$ are simple we know that $w^HB_3y\neq 0$, and that $v^HM'(\lambda)x \neq 0$. Hence, $\det(C_0)\neq0$.
\end{proof}

From \eqref{eq:det_C0} we can conclude that the bound \eqref{eq:Dl_1} on $|\Delta\lambda|$ can be written as
\begin{align}\label{eq:Dl_2}
 |\Delta\lambda| \leq  \left(\beta_1  +  |g(\lambda)| \beta_3\right) \frac{|v^H A_3 x|}{|\det(C_0)|}\varepsilon + \OOO(\varepsilon^2).
\end{align}
Moreover, for a nonsingular $C_0$ it is shown in \cite[Theorem~6]{Hochstenbach:2003:BACKWARD} that the condition number of the two-parameter eigenvalue is
\begin{align*}
K
=
\|C_0^{-1}\|_\theta,
\end{align*}
where the $\theta$-norm, i.e., $\|\cdot\|_\theta$, is an induced norm defined in~\cite[Equation~(5)]{Hochstenbach:2003:BACKWARD}.
In our case we can explicitly bound the condition number
by using bounds following directly from the definition of the $\theta$-norm:
\begin{multline*}
 \|C_0^{-1}\|_\theta
=
\frac{1}{|\det(C_0)|}\left\|
\begin{bmatrix}
w^HB_3y & -v^HA_3x\\
-w^HB_2y  & v^HA_2x
\end{bmatrix}
\right\|_\theta
\geq\\
\frac{1}{|\det(C_0)|}\left\|
\begin{bmatrix}
0 & -v^HA_3x\\
0  & 0
\end{bmatrix}
\right\|_\theta
=
\frac{|v^HA_3x||\theta_2|}{|\det(C_0)|}.
\end{multline*}
The parameter $\theta_2$ is the second component of the $\theta$-vector used in the definition of the $\theta$-norm. Hence, the bound in \eqref{eq:Dl_2} can be further bounded by
\begin{align}\label{eq:Dl_3}
|\Delta\lambda| &\leq  K  \frac{\beta_1  +  |g(\lambda)| \beta_3}{|\theta_2|}\varepsilon + \OOO(\varepsilon^2).
\end{align}
The typical choices of $\theta$ corresponding to the absolute respectively relative condition number of the two-parameter eigenvalue problem
are $|\theta_2| = 1+|\lambda|+|g(\lambda)|$ and $|\theta_2| = \|B_1\|+|\lambda|\|B_2\|+|g(\lambda)|\|B_3\|$.
From the bounds in~\eqref{eq:Dl_3} we therefore conclude: The error generated by a backward stable method is benign for well conditioned two-parameter eigenvalue problems.

\section{Simulations}\label{sec:sim}

\subsection{Random example}\label{sec:random}
We generate an example similar to the example in \cite{Hochstenbach:2005:TWOPARAMETER},
but with $m\ll n$.
More precisely, we let
\[
A_i=\alpha_i V_{A_i}F_iU_{A_i}, \;\;
B_i=\beta_iV_{B_i}G_iU_{B_i},\;\; i=1,2,3
\]
where $n=5000$ and $m=20$. The matrices
$V_{A_i}$, $U_{A_i}$, $V_{B_i}$, $U_{B_i}$ have randomly normal distributed elements
and $F_i$,  $G_i$ are diagonal matrices with randomly normal distributed diagonal elements. The scalars $\alpha_i$ and $\beta_i$ were selected
such that the eigenvalues closest to the origin were of order of
magnitude one in modulus
($\alpha_1=\beta_1=1$, $\alpha_2=\beta_2=1/500$, $\alpha_3=\beta_3=1/50$).
The simulations were carried out using the Julia language \cite{Bezanson2017} (version 1.1.0), but
implementations of the algorithms are available online for both
Julia and MATLAB.\footnote{The matrices and the simulations are provided online for
reproducibility: \url{http://www.math.kth.se/~eliasj/src/nonlinearization}}

\begin{figure}[h]
  \begin{center}
    \subfigure[Real]{\scalebox{1.0}{\includegraphics{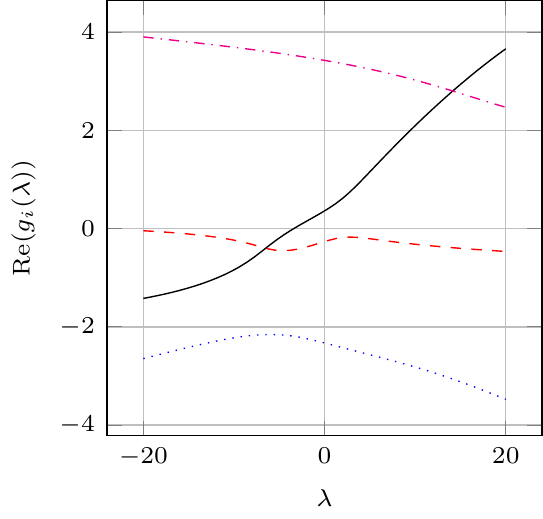}}}\;\;\;\;\;\;%
    \subfigure[Imag]{\scalebox{1.0}{\includegraphics{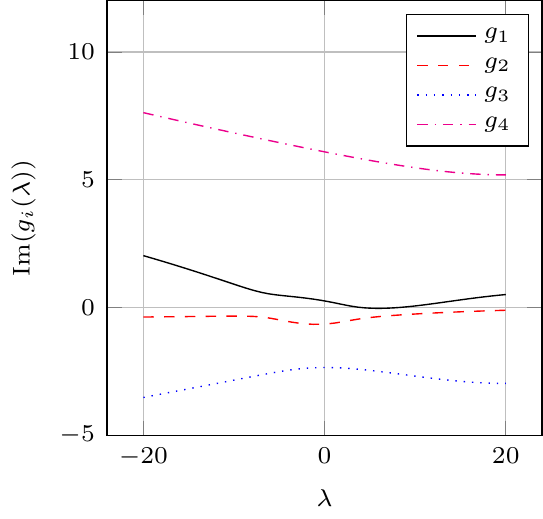}}}
    \caption{The functions $g_i(\lambda)$, $i=1,\ldots,4$ closest to the origin, for $\lambda\in [-20,20]$. All functions are  analytic in the considered interval.
      \label{fig:random:g_of_lambda}
    }
  \end{center}
\end{figure}

\begin{figure}[h]
  \begin{center}
    \subfigure[Solutions $\lambda$]{\scalebox{1.0}{\includegraphics{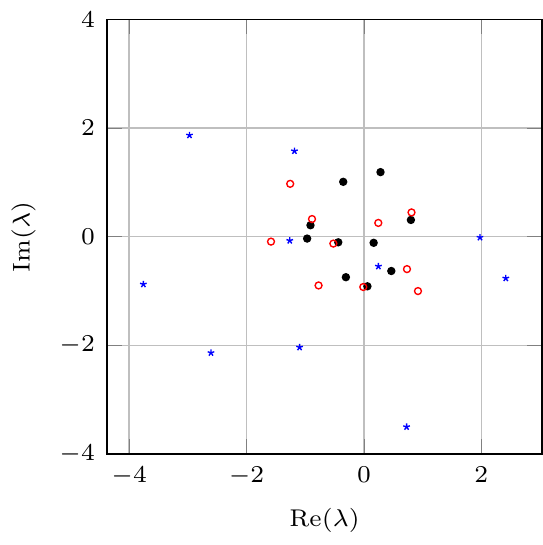}}}\;\;\;\;%
    \subfigure[Solutions $\mu$]{\scalebox{1.0}{\includegraphics{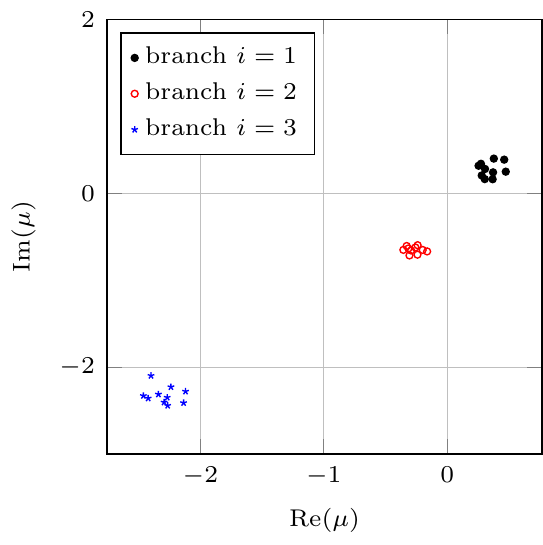}}}%
    \caption{Solutions corresponding to $g_i$ where $i=1,2,3$.
      \label{fig:random:solutions}
    }
  \end{center}
\end{figure}

Since $m=20$, we in general obtain $20$ different functions $g_1,\ldots,g_{20}$,
which we order by magnitude in the origin,
each corresponding to a different NEP.
Some of the nonlinear functions $g_i$ are visualized in
Figure~\ref{fig:random:g_of_lambda}. The solutions closest to the origin,
for the NEPs corresponding to the functions $g_1,g_2,g_3$,
are given in
Figure~\ref{fig:random:solutions}.

Note that this problem is of such size that
the naive approach with operator determinants \eqref{eq:deltaeqs}
is not feasible, since we cannot even store them in memory on the
computers we use for the simulations\footnote{The simulations were carried out using Ubuntu Linux, Intel(R) Core(TM) i7-8550U CPU @ 1.80GHz, 16 GB of RAM}.

We illustrate our algorithms and compare with several other single-vector
state-of-the-art algorithms in \cite{Plestenjak:2016:NUMERICAL}.
As starting values we use $\lambda_0=0.15+0.1i$ and $\mu_0=35+0.25i$,
and a starting vector
with an elementwise absolute error (from a nearby solution) less than $0.05$.
The iteration history of Algorithm~\ref{alg:augnewton}
is given in Figure~\ref{fig:random:augnewton}.
We observe an asymptotic fast convergence for Algorithm~\ref{alg:augnewton}, which is expected
since the solution point is analytic and simple.
The error is measured at Step~3 in
Algorithm~\ref{alg:augnewton} which implies that by construction,
the error in the $B$-equation is (numerically) zero. This
is a property of the elimination in our approach.
We compare (with the same starting values) with
the inverse iteration Newton approach
proposed in \cite{Plestenjak:2016:NUMERICAL}.
Note that this method is designed for more general problems,
and not specifically our
situation where $m\ll n$ and also multiparameter nonlinear problems.
Since \cite[InvIter]{Plestenjak:2016:NUMERICAL} requires several linear
solves per iteration, our algorithm is faster in this case.
The comparison between the two algorithms as a function of
iteration is inconclusive, as can be seen in Figure~\ref{fig:random:augnewton}a.

\begin{figure}[h]
  \begin{center}
       \subfigure[Residual norm vs iteration]{\includegraphics{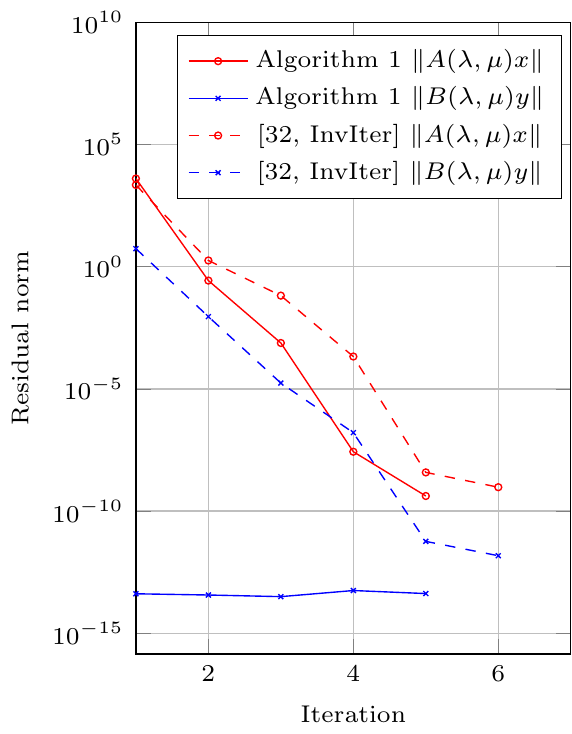}}\;\;\;\;%
   \subfigure[Residual norm vs CPU-time]{\includegraphics{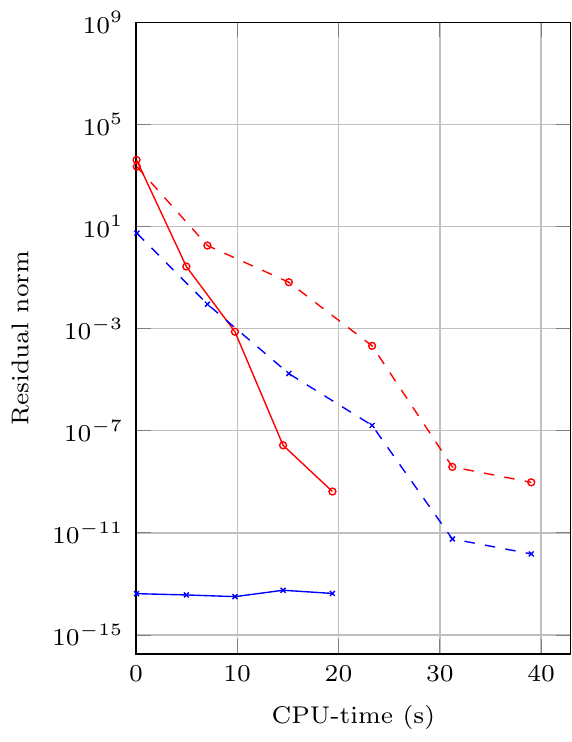}}
   \caption{Visualization of the convergence of Algorithm~\ref{alg:augnewton}
and \cite[Algorithm 1 (InvIter)]{Plestenjak:2016:NUMERICAL} for the problem
     in Section~\ref{sec:random}.
      \label{fig:random:augnewton}
    }
  \end{center}
\end{figure}

The convergence of our adaption of residual inverse iteration (Algorithm~\ref{alg:resinv})
initiated in the same way (except the starting vector is chosen as a vector
of ones) is illustrated in Figure~\ref{fig:random:resinv}.
We clearly see the expected linear convergence, since
it is equivalent to residual inverse iteration for NEPs and the convergence
theory
in \cite[Section~3--4]{Neumaier:1985:RESINV} is directly applicable.
We compare with a proposed generalization of residual inverse iteration
\cite[InvIter]{Plestenjak:2016:NUMERICAL},  again noting that it has a
much wider applicability domain than our approach. In this case, our method
has a smaller convergence factor, intuitively motivated by the
fact that we solve the $B$-equation exactly.

\begin{figure}[h]
  \begin{center}
       \subfigure[Residual norm vs iteration]{\includegraphics{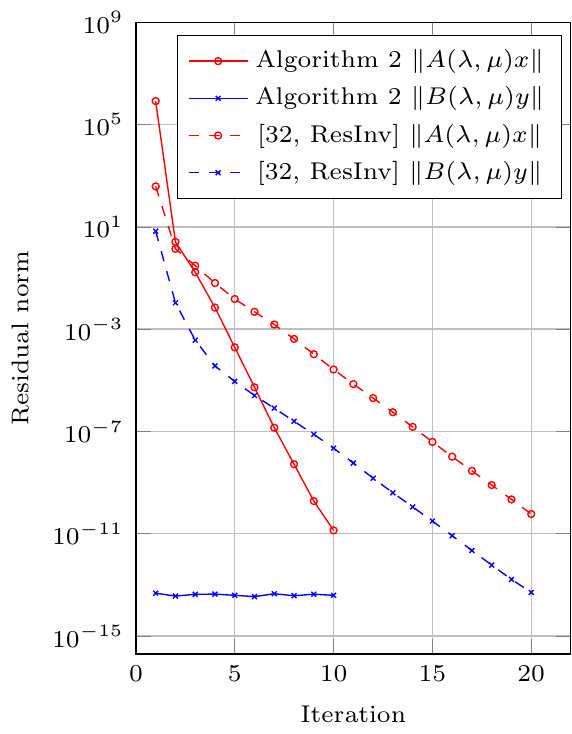}}\;\;\;\;%
   \subfigure[Residual norm vs CPU-time]{\includegraphics{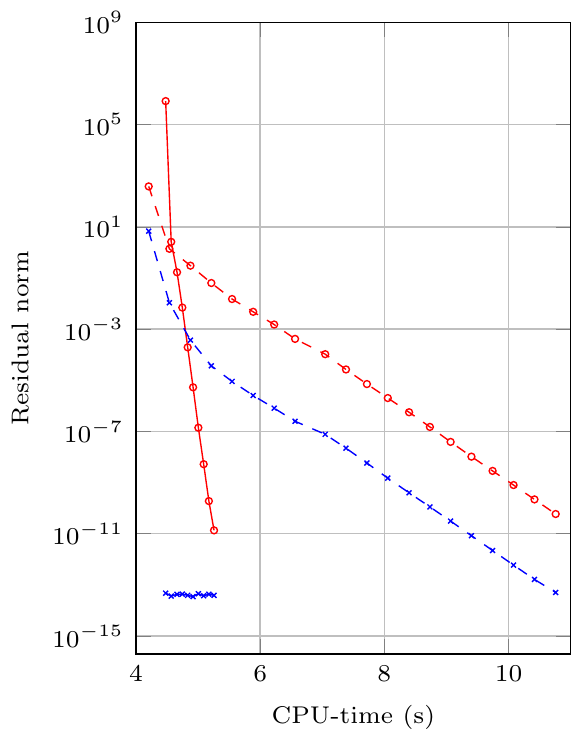}}
   \caption{Visualization of the convergence of Algorithm~\ref{alg:resinv}
and \cite[Algorithm 2 (ResIter)]{Plestenjak:2016:NUMERICAL} for the problem
     in Section~\ref{sec:random}.
      \label{fig:random:resinv}
    }
  \end{center}
\end{figure}

The problem can also be solved with the tensor
infinite Arnoldi method \cite{Jarlebring:2017:TIAR}. More specifically,
we use the implementation of the method
available in the Julia package NEP-PACK
\cite{Jarlebring:2018:NEPPACK} (version 0.2.7).
By directly using Theorem~\ref{thm:der}
we can compute the 60 first derivatives.
The convergence of the first ten eigenvalues
are visualized in Figure~\ref{fig:random:iar}, for
two branches.
The solutions are visualized in Figure~\ref{fig:random:solutions}.

\begin{figure}[h]
  \begin{center}
     \subfigure[Branch 1: $M(\lambda)=A_0+\lambda A_1+g_1(\lambda)A_2$]{\scalebox{1.0}{\includegraphics{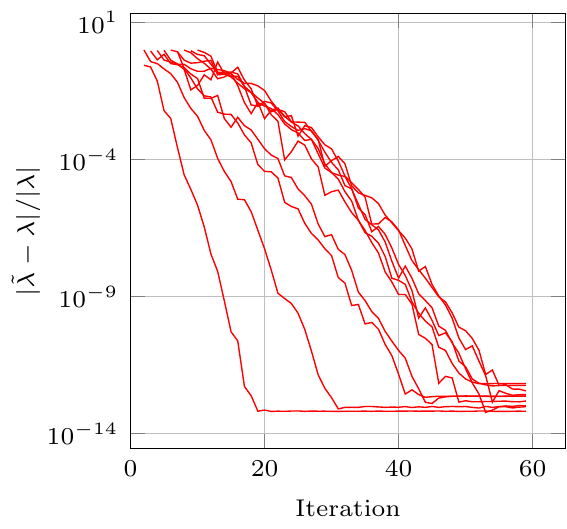}}}\;\;\;\;\;%
   \subfigure[Branch 2: $M(\lambda)=A_0+\lambda A_1+g_2(\lambda)A_2$]{\scalebox{1.0}{\includegraphics{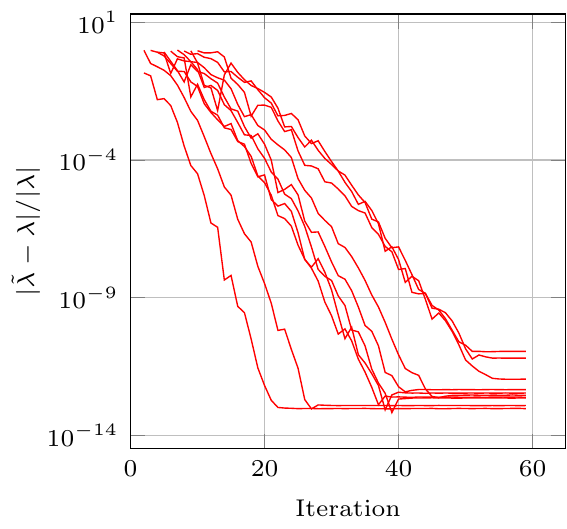}}}%
   \caption{Visualization of convergence of the tensor infinite Arnoldi method for the  problem
     in Section~\ref{sec:random}, for $g_1$ and $g_2$.
      \label{fig:random:iar}
    }
  \end{center}
\end{figure}


\subsection{Domain decomposition example}\label{sec:dd}

      We consider a PDE-eigenvalue problem, which we separate
      into two domains in a way that it leads to a two-parameter
      eigenvalue problem. Although domain decomposition
      (and coupling via boundary conditions) is not new for
      standard eigenvalue problems, the fact that this type of
      domain decomposition can be phrased
      as a two-parameter eigenvalue problem has, to our
      knowledge, not previously been observed. Although the technique seems
      applicable in several physical dimensions, we consider a one-dimensional problem
      for reproducibility.

      Consider the Helmholtz eigenvalue problem defined in the domain $x\in [x_0,x_2]$,
\begin{subequations}\label{eq:pde_dd}
\begin{eqnarray}
  u''(x)+\kappa^2(x)u&=&\lambda u(x)\textrm{ for }x\in [x_0,x_2] \\
  u(x_0)&=&0\\
  u'(x_2)&=&0,
\end{eqnarray}
\end{subequations}
 with a wavenumber $\kappa$ which is discontinuous in one
 part of the domain and smooth in another, as in Figure~\ref{fig:wavenumber}.
 We take a point $x_1$ such that $\kappa$ is smooth for $x>x_1$, assume
 that the solution is non-zero in the interface point $x=x_1$,
 and define
 \[
 \mu:=\frac{u'(x_1)}{u(x_1)} .
 \]
 This means we have two separate PDEs for the two domains:
\begin{subequations}\label{eq:pde_dd1}
 \begin{eqnarray}
   u_1''(x)+\kappa^2(x)u_1&=&\lambda u_1(x),\;\; x_0\le x \le x_1\\
   u_1(x_0)&=&0\\
   u_1'(x_1)-\mu u_1(x_1)&=&0
 \end{eqnarray}
\end{subequations}
  and
\begin{subequations}\label{eq:pde_dd2}
 \begin{eqnarray}
   u_2''(x)+\kappa^2(x)u_2(x)&=&\lambda u_2(x),\;\; x_1\le x \le x_2\\
   u_2'(x_1)-\mu u_2(x_1)&=&0\\
   u_2'(x_2)&=&0.\;\;
 \end{eqnarray}
\end{subequations}
 These are standard linear PDEs (with robin boundary conditions)
 and the uniqueness of these PDEs implies an
 equivalence with the original PDE \eqref{eq:pde_dd}.  See \cite{Lui:2000:DD} and references therein for domain decomposition methods for PDE eigenvalue  problems.

   \begin{figure}[h]
     \begin{center}
      \scalebox{1.0}{\includegraphics{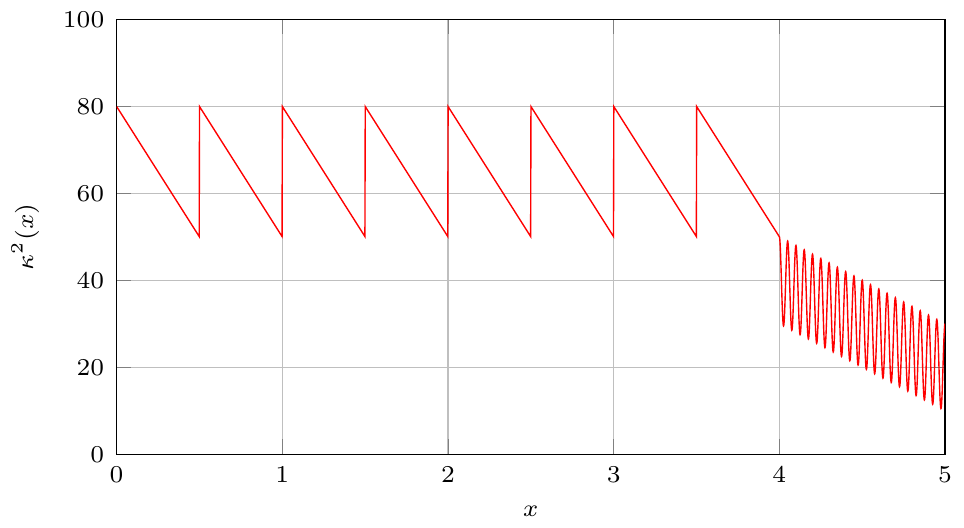}}
       \caption{
         The wavenumber for the example in Section~\ref{sec:dd}. The wavenumber is sinusoidal with
         high frequency in the interval $[4,5]$,
         and discontinuous in $\frac{1}{2},\frac22,\frac{3}{2},\ldots,\frac72$.
         \label{fig:wavenumber}
       }
     \end{center}
   \end{figure}

 The wavenumber is given as in Figure~\ref{fig:wavenumber}, i.e.,
 it is discontinuous at several points in $[x_0,x_1]$
 and with a high frequency decaying sine-curve in $[x_1,x_2]$, representing
 a inhomogeneous periodic medium.
 We invoke different discretizations in the two domains, for the following reasons.
 Since $\kappa$ is discontinuous in $[x_0,x_1]$ spectral discretization in that domain
 will not be considerably faster than a finite difference approximation.
 We therefore use a uniform second order
 finite difference for \eqref{eq:pde_dd1} to obtain sparse matrices and one sided second order finite different scheme for the boundary condition. A spectral discretization is used for $[x_1,x_2]$ where the wavenumber is smooth.
 Since $\mu$ appears linearly in the boundary condition,
 the discretization leads to a two-parameter
   eigenvalue problem of the type \eqref{eq:2ep}. In our setting
   $A_1,A_2,A_3$ are large and sparse, and
   $B_1,B_2,B_3$ are full matrices of smaller size.
   We use the discretization parameters such that $n=10^6$ and $m=30$,
   and $x_0=0$, $x_1=4$ and $x_2=5$.
   In order to make the measurement of error easier, we use left diagonal scaling of the problem
   such that the diagonal elements of
   $A(1.0,1.0)$ and $B(1.0,1.0)$ are equal to one.

   The eigenvalues and some corresponding eigenfunctions are plotted in
   Figure~\ref{fig:example2_solution_values} and Figure~\ref{fig:example2_solutions}.
   The nonlinear function $g_1$ of this problem is
   given in Figure~\ref{fig:example2_function}. Clearly the function has
   singularities for some real $\lambda$-values. The convergence
   of Algorithm~\ref{alg:augnewton} and Algorithm~\ref{alg:resinv}
   are again compared to \cite{Plestenjak:2016:NUMERICAL} in
   Figure~\ref{fig:example2:resinv}. We again conclude that both our
   approaches are competitive, although not always faster
   in terms of iterations, but our approach is generally faster in
   terms of CPU-time.
   The algorithms
   are initiated with approximate rounded eigenvectors and eigenvalues
   close to the eigenvalue $\lambda\approx 18$. We note that our methods
   do not require a starting value for $\mu$ (in contrast to \cite{Plestenjak:2016:NUMERICAL})
   which is an attractive feature from an application point of view,
   since the value $\mu=u'(x_1)/u(x_1)$ is artificially introduced parameter and
   may not be easy to estimate.

   \begin{figure}[h]
     \begin{center}
       \scalebox{1.0}{\includegraphics{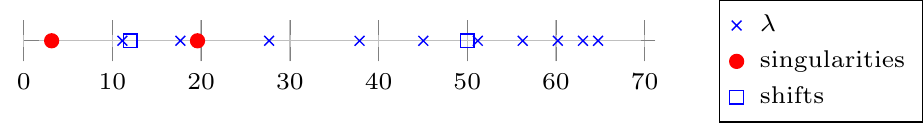}}
       \caption{Computed eigenvalues,
         singularities, and the shifts used in
         the infinite Arnoldi method.
         \label{fig:example2_solution_values}
       }
     \end{center}
   \end{figure}
      \begin{figure}[h]
        \begin{center}
       \scalebox{1.0}{\includegraphics{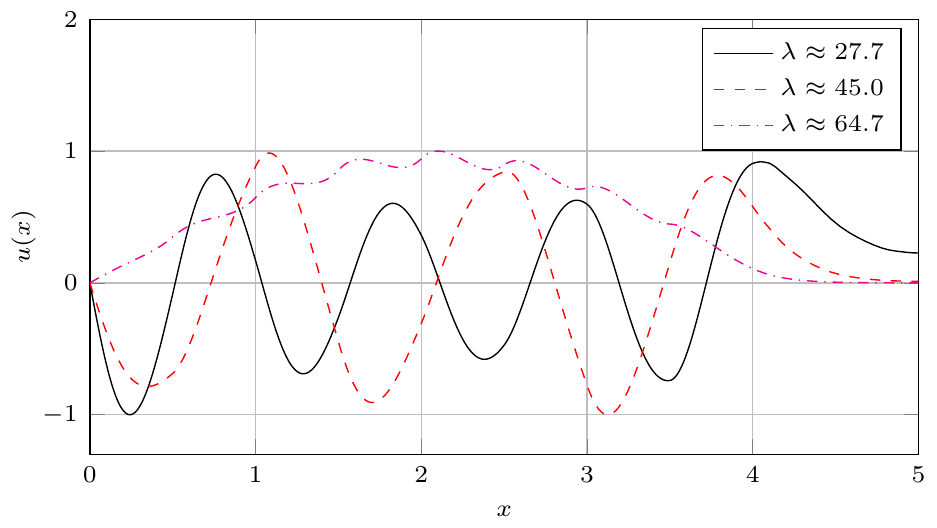}}
       \caption{
Some computed eigenfunctions of the PDE \eqref{eq:pde_dd}
         \label{fig:example2_solutions}
       }
     \end{center}
   \end{figure}
   \begin{figure}[h]
     \begin{center}
       \scalebox{1.0}{\includegraphics{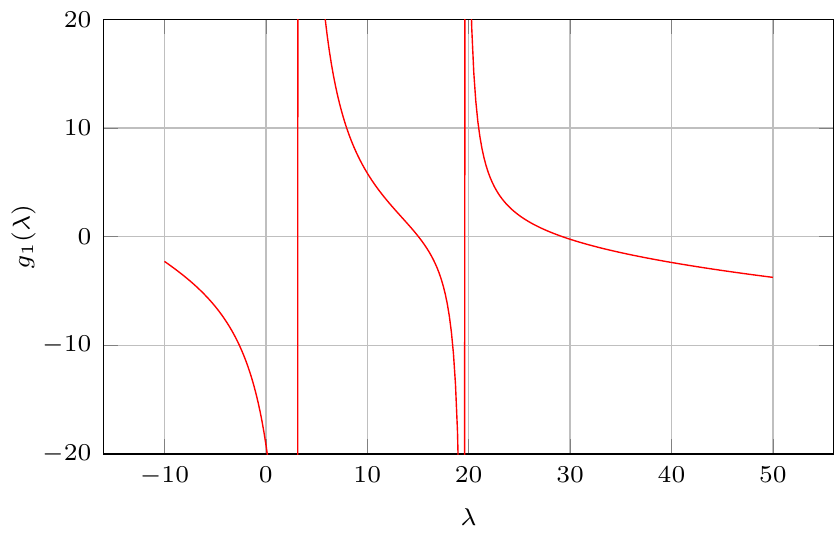}}
       \caption{
The nonlinear function $g_1$ in the simulation in Section~\ref{sec:dd}.
         \label{fig:example2_function}
       }
     \end{center}
   \end{figure}

   We apply the tensor infinite
   Arnoldi method also for this problem. Since this family of methods
   is based on a power series expansion, one can only expect to be able to
   compute eigenvalues on the same side of the singularities as the shift.
   We
   therefore run the algorithm several times for different shifts,
   and select the shifts far away from the singularities,
   as described in Figure~\ref{fig:example2_solution_values}.
   The convergence of the two runs are illustrated in
   Figure~\ref{fig:example2_iar}. Note that the convergence corresponding to one
   eigenvalue for the shift $\sigma=12$ stagnates. This is because
   the eigenvalue is close to the singularity, and therefore difficult to compute, as
   can be seen in Figure~\ref{fig:example2_solution_values}.

   \begin{figure}[h]
     \begin{center}
       \subfigure[Shift $\sigma=12.0$]{\scalebox{1.0}{\includegraphics{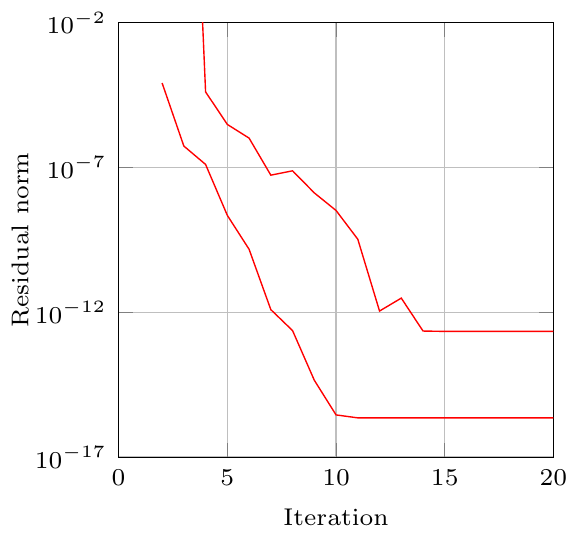}}}
       \subfigure[Shift $\sigma=50.0$]{\scalebox{1.0}{\includegraphics{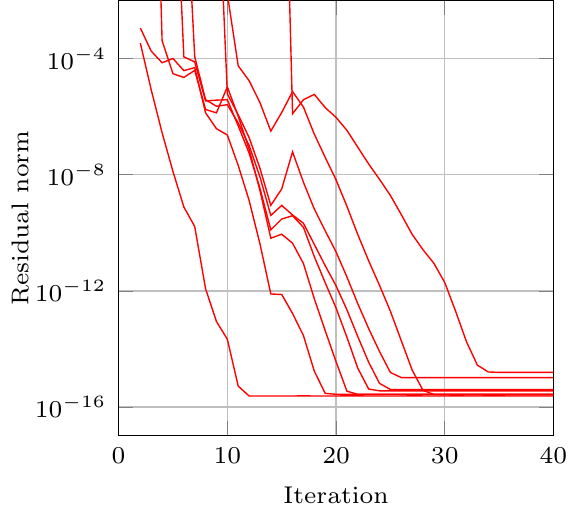}}}
       \caption{
         Convergence history for two different shifts
        \label{fig:example2_iar}
       }
     \end{center}
   \end{figure}

\begin{figure}[h]
  \begin{center}
    \subfigure[Newton-type algorithms]{\includegraphics{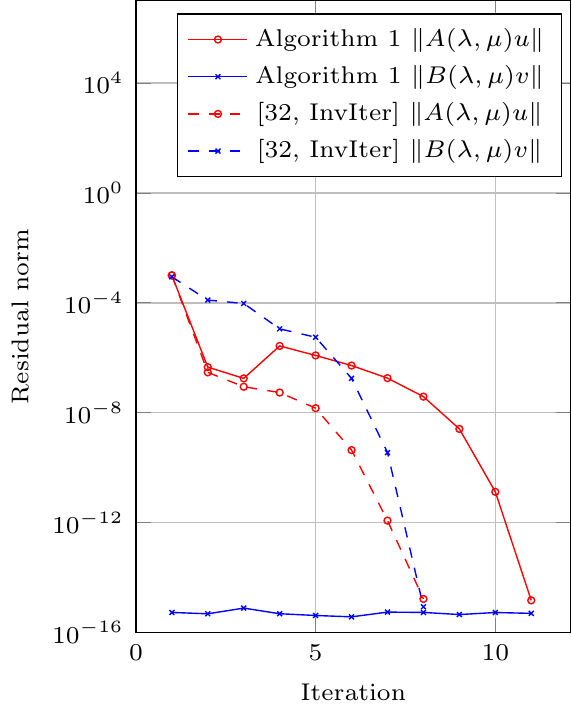}}\;\;\;\;%
    \subfigure[Fixed shift methods]{\includegraphics{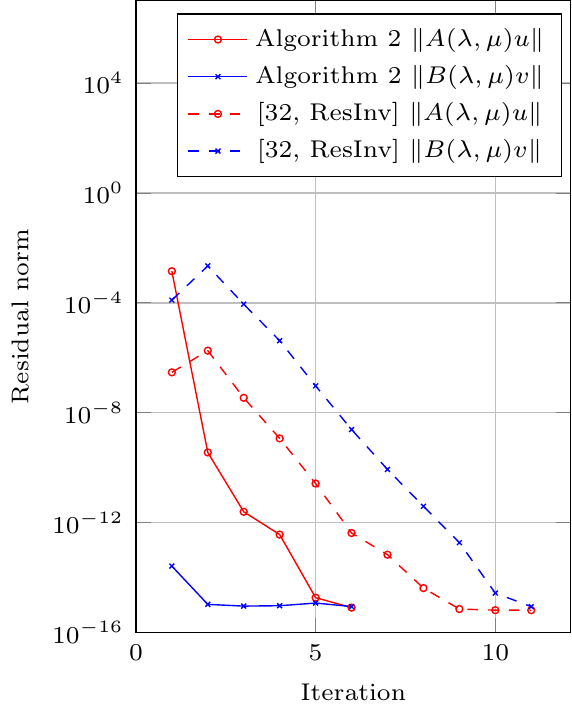}}
   \caption{Visualization of the convergence of
     the proposed algorithms and two
     algorithms in \cite{Plestenjak:2016:NUMERICAL}
     applied to the domain decomposition example in Section~\ref{sec:dd}.
      \label{fig:example2:resinv}
    }
  \end{center}
\end{figure}
\section{Conclusions and outlook}\label{sec:conclusions}
We have presented a general framework
to approach two-parameter eigenvalue problems,
by nonlinearization to NEPs. Several steps
in this technique seem
to be generalizable (but beyond the scope
of the paper), e.g., to general multiparameter
eigenvalue problems essentially by successive
application of the elimination. One such elimination
leads to a nonlinear two-parameter eigenvalue problem
as considered, e.g., in \cite{Plestenjak:2016:NUMERICAL}.

Our paper uses the assumption $m\ll n$ and that
$A_1$, $A_2$ and $A_3$ are large and sparse. We made
this assumption mostly for convenience,
since this allows us to apply
a general purpose method for the parameterized eigenvalue
problem \eqref{eq:B_eig}. If, on the other hand, we wish to
solve two-parameter eigenvalue problems
where these assumptions are not satisfied, the ideas may
still be useful. The GEP \eqref{eq:B_eig} may for
instance be approached with structured algorithms (exploiting
sparsity,
low-rank properties and symmetry), or
iterative solution methods for the GEP, where
early termination is coupled with the NEP-solver.

The generated nonlinear functions $g_i$ are algebraic functions, and
can therefore contain singularities (e.g. branch point singularities as
characterized in Section~\ref{sec:theory}). These can be
problematic in the numerical method, and therefore it would
useful with transformations that remove singularities. Linearization
which do not lead to singularities have been established for
rational eigenvalue problems \cite{Su2011}.

The problem in Section~\ref{sec:dd} is
such that we obtain one large and sparse parameterized matrix $A(\lambda,\mu)$
which
is coupled with a small and dense system.
The setting matches the assumptions of the paper
and is  a representative
example of cases where the behavior is different
in the two physical domains. The example may be generalizable,
to other coupled physical systems where the modeling
in one domain leads to a much smaller matrix,
e.g., using domain decomposition with more physical dimensions.
Note however that the presented methods seem mostly computationally
attractive if the discretization of one domain is much smaller.
If we apply the same technique to domains of equal size,
other generic two-parameter eigenvalue methods (such as
those in \cite{Plestenjak:2016:NUMERICAL}) may be more effective.

\section*{Acknowledgment}
We thank Olof Runborg, KTH Royal Institute of Technology, for discussions regarding the Helmholtz  eigenvalue problem.

\bibliographystyle{plain}
\bibliography{fulljabref}

\end{document}